\newcounter{dummy}
\newcommand\myitem[1][]{\item[#1]\refstepcounter{dummy}\def\@currentlabel{#1}}
\def\reals{\mathbbm{R}}
\def\ereals{\overline{\reals}}
\def\comp{\mathop{\text{\scriptsize\raise 1pt \hbox{$\circ$}}}}
\def\infconv{\mathop{\text{\scriptsize\raise 1pt \hbox{$\square$}}}}
\def\argmin{\mathop{\rm argmin}\limits}
\def\minimize{\mathop{\rm minimize}\limits}
\def\st{\mathop{\rm subject\ to}}
\def\dom{\mathop{\rm dom}\nolimits}
\def\ovr{\mathop{\rm over}\ }
\def\upto{{\raise 1pt \hbox{$\scriptstyle \,\nearrow\,$}}}
\def\downto{{\raise 1pt \hbox{$\scriptstyle \,\searrow\,$}}}
\def\epi{\mathop{\rm epi}\nolimits}
\def\FF{(\F_t)_{t=0}^T}
\def\B{{\cal B}}
\def\F{{\cal F}}
\def\G{{\cal G}}
\def\H{{\cal H}}
\def\N{{\cal N}}
\newtheorem{theorem}{Theorem}
\newtheorem{remark}[theorem]{Remark}
\theoremstyle{definition}
\theoremstyle{empty}
\title{Dynamic programming and dimensionality in convex stochastic optimization and control\footnote{No data are associated with this article. No external funding was involved and there are no conflicts of interest.}}
\author{Teemu Pennanen\thanks{Department of Mathematics, King's College London, Strand, London, WC2R 2LS, United Kingdom, teemu.pennanen@kcl.ac.uk} \and Ari-Pekka Perkki\"o\thanks{Mathematics Institute, Ludwig-Maximilian University of Munich, Theresienstr. 39, 80333 Munich, Germany, a.perkkioe@lmu.de.}}
\begin{document}

\maketitle

\begin{abstract}
This paper studies stochastic optimization problems and associated Bellman equations in formats that allow for reduced dimensionality of the cost-to-go functions. In particular, we study stochastic control problems in the ``decision-hazard-decision'' form where at each stage, the system state is controlled both by predictable as well as adapted controls. Such an information structure may result in a lower dimensional system state than what is required in more traditional ``decision-hazard'' or ``hazard-decision'' formulations. The dimension is critical for the complexity of numerical dynamic programming algorithms and, in particular, for cutting plane schemes such as the stochastic dual dynamic programming algorithm. Our main result characterizes optimal solutions and optimum values in terms of solutions to generalized Bellman equations. Existence of solutions to the Bellman equations is established under general conditions that do not require compactness. We allow for general randomness but show that, in the Markovian case, the dimensionality of the Bellman equations reduces with respect to randomness just like in more traditional control formulations.
\end{abstract}

\noindent\textbf{Keywords.} Stochastic optimal control, dynamic programming, convexity
\newline
\newline
\noindent\textbf{AMS subject classification codes.} 93E20, 90C39, 26B25

\section{Introduction}

Let $(\Omega,\F,P)$ be a probability space with a filtration $\FF$ and consider the optimal control problem
\begin{equation}\label{oc}
\begin{aligned}
  &\minimize & E\Bigg[\sum_{t=0}^{T-1}&  L_t(X_t,U^b_t,U^a_{t+1})+ J(X_T)\Bigg]\ \ovr\ (X,U^b,U^a)\in\N,\\ 
  &\st\ & X_{t+1} &= F_{t}(X_t,U^b_t,U^a_{t+1})\quad t=0,\dots,T-1\ a.s.,
\end{aligned}
\end{equation}
where the state $X_t$ and the controls $U^b_t$ and $U^a_t$ are random variables taking values in $\reals^N$, $\reals^{M^b}$ and $\reals^{M^a}$. The functions $L_t:\reals^N\times\reals^{M^b}\times\reals^{M^a}\times\Omega\to\ereals$ and $J:\reals^N\times\Omega\to\ereals$ are extended real-valued convex normal integrands and, for each $t$, the function $F_t:\reals^N\times\reals^{M^b}\times\reals^{M^a}\times\Omega\to\reals^N$ is given by
\begin{equation*}
  F_t(X_t,U^b_t,U^a_{t+1},\omega) := A_{t+1}(\omega)X_t+B^b_{t+1}(\omega)U^b_t+B^a_{t+1}(\omega)U^a_{t+1}+W_{t+1}(\omega),
\end{equation*}
where $A_{t+1}$, $B^b_{t+1}$ and $B^a_{t+1}$ are $\F_{t+1}$-measurable random matrices of appropriate dimensions and $W_{t+1}$ are $\F_{t+1}$-measurable random vectors. 
 The set $\N$ denotes the space of adapted stochastic processes, i.e.\ those where $(X_t,U^b_t,U^a_t)$ is $\F_t$-measurable for each $t=0,\ldots,T$. Accordingly, we will assume that $\F_0$ only consists of sets of measure zero or one, so that any $\F_0$-measurable function is almost surely constant.  The constrains in \eqref{oc} are called the {\em system equations}.

Problems of the form \eqref{oc} were introduced in \cite{ccdmt} as a unification of more common formulations of stochastic control problems where one of the control variables $U^b_t$ or $U^a_t$ is absent. While $U^b_t$ represents control decisions taken under the information available at time $t$ only, the control $U^a_{t+1}$ may depend on information available at time $t+1$ after e.g.\ the values of the matrices $A_{t+1}$ and $B_{t+1}$ in the system equations are observed. Problems with controls $U^b_t$ only are sometimes said to be in the ``decision-hazard'' format while problems with $U^a_t$ only are said to be in ``hazard-decision'' format. Accordingly, in \cite{ccdmt} problems of the form \eqref{oc} are said to be in the ``decision-hazard-decision'' format. Such problems arise naturally e.g.\ in ``two-timescale'' stochastic optimization models that are convenient e.g.\ in energy systems modelling; see \cite{pdccjr} and its references.

It is possible to reformulate problem \eqref{oc} in the hazard-decision format by augmenting the system state $X$ by the here-and-now control variable $U^b_t$; see Remark~\ref{rem:hd} below. This, however, increases the dimensionality of the dynamic programming equations by the dimension of $U^b_t$. Dynamic programming algorithms are often sensitive to the dimension of the system state so the state augmentation may be problematic from the computational point of view. The dimension is particularly critical for cutting plane-based algorithms such as stochastic dual dynamic programming (SDDP); see \cite[Section~3.3.1]{nes18}.

It was discovered in \cite[Section~6]{ccdmt} that the extended control formulation of \eqref{oc} allows for dynamic programming equations in the original state space thus avoiding the state augmentation of Remark~\ref{rem:hd}. Indeed, \cite[Theorem~13]{ccdmt} expresses the optimum value of \eqref{oc} as $J_0(X_0)$ where the initial state $X_0$ is fixed and the function $J_0$ is given recursively by the {\em dynamic programming equations} (aka Bellman equations)
\begin{equation}\label{bei}
  J_t(X_t) = \inf_{U^b_t\in\reals^{M^b}}E_t\left[\inf_{U^a_{t+1}\in\reals^{M^a}}\{L_t(X_t,U^b_t,U^a_{t+1})  + J_{t+1}(F_t(X_t,U^b_t,U^a_{t+1}))\}\right].
\end{equation}
Here and in what follows, we use the short hand notation $E_t=E^{\F_t}$ for the $\F_t$-conditional expectation of a random variable. Theorem~13 of \cite{ccdmt} assumed that the randomness is driven by a sequence of stagewise independent random variables and the cost functions $L_t$ were assumed to be ``lower semianalytic'' in order to establish sufficient measurability so that the expectations in \eqref{bei} are well-defined. When the underlying probability space is finite, all the measurability questions disappear. In practice, however, one is often faced with continuously distributed random variables.

This paper extends \cite[Theorem~13]{ccdmt} in the convex case by 1.\ giving optimality conditions for \eqref{oc} in terms of the Bellman functions $J_t$, 2.\ providing sufficient conditions for existence of solutions $(J_t)_{t=0}^T$ to the Bellman equations \eqref{bei}, 3.\ allowing for general randomness in the cost functions and system equations in problem \eqref{oc}. Moreover, we will show that, under an appropriate Markov property, the dimensionality of the cost-to-go functions $J_t$ reduces with respect to the underlying randomness. The last section describes an extension of the Stochastic Dual Dynamic Programming algorithm for \eqref{oc} alluded to at the end of \cite[Section~6]{ccdmt}.

Moreover, we prove the above results in an extended problem format (in Section~\ref{sec:dim}) that is even more general than \eqref{oc} while allowing for Bellman equations with the same dimensionality reductions as for \eqref{oc}. The main results are derived from the theory developed in the recent book \cite{pp24}. Closely related stochastic optimization models were studied in \cite[Section~2.3]{pp24} but without specifying the structure required for dimensionality reduction in the Bellman equations.

The rest of the paper is organized as follows. The following section presents a dynamic programming theory for problem \eqref{oc}. The proofs of the three results in Section~\ref{sec:dpoc} will be given in Section~\ref{sec:oc} as consequences of the dynamic programming theory developed in Section~\ref{sec:dim}. The results in Section~\ref{sec:dim} are, in turn, derived from the results on the abstract Lagrangian problem format in Section~\ref{sec:L}.

\section{Dynamic programming in optimal control}\label{sec:dpoc}

This section gives the main results concerning dynamic programming for problem \eqref{oc}. The proofs will be obtained in Section~\ref{sec:oc} as applications of corresponding results for a more general problem format to be introduced in Section~\ref{sec:dim}.

Much like \cite{rw76} and \cite{evs76}, our approach is based on the theory of {\em normal integrands} developed for optimization problems involving integral functionals; see e.g.\ \cite{roc68}, \cite{bis73}, \cite[Chapter~14]{rw98} and the references there. Recall that a function $h:\reals^n\times\Omega\to\ereals$ is a {\em normal integrand} if the epigraphical mapping $\omega\mapsto\epi h(\cdot,\omega)$ is closed-valued and measurable; see \cite[Chapter~14]{rw98}. Given a sub-$\sigma$-algebra $\G$ of $\F$, the {\em $\G$-conditional expectation} of $h$, when it exists, is a $\G$-measurable normal integrand $E^\G h$ such that 
\[
(E^\G h)(x)= E[h(x)|\G]\ a.s.
\]
for every $\G$-measurable $\reals^n$-valued random variable $x$. Here, $E[h(x)|\G]$ denotes the $\G$-conditional expectation of the random variable $h(x)$. If $h$ is {\em lower bounded} in the sense that there exists $m\in L^1$ such that
\[
h(x,\omega)\ge m(\omega) \quad \forall x\in \reals^n
\]
for almost every $\omega$, then $E^\G h$ exists and is unique; see \cite[Theorem~2.13]{pp24}. If, in addition, the normal integrand $h$ can be expressed as $h(x,\omega)=H(x,\xi(\omega))$ for a $\reals^d$-valued random variable $\xi$ and a normal integrand $H$ on $\reals^n\times\reals^d$ (where we endow $\reals^d$ with the Borel $\sigma$-algebra), then we have the pointwise representation
\begin{equation}\label{eq:kernel}
(E^\G h)(x,\omega) = \int_{\reals^d}H(x,s)\mu(\omega,ds)\quad\forall(x,\omega)\in\reals^n\times\Omega,
\end{equation}
where $\mu$ is the regular $\G$-conditional distribution of $\xi$; see \cite[Lemma~2.55]{pp24}.

When $\G=\F_t$, we will use the short hand notation $E_th=E^{\F_t}h$. We say that two sequences $(J_t)_{t=0}^T$ and $(\tilde Q_t)_{t=0}^T$ of normal integrands solve the {\em Bellman equations} for problem \eqref{oc} if, for almost every $\omega\in\Omega$,
\begin{equation}\label{be}
\begin{split}
J_T(X_T,\omega) &= J(X_T,\omega),\\
\tilde Q_t(X_t,U^b_t,\omega) &= \inf_{U^a_{t+1}\in\reals^{M^a}}\{L_t(X_t,U^b_t,U^a_{t+1},\omega)  + J_{t+1}(F_t(X_t,U^b_t,U^a_{t+1},\omega),\omega)\},\\
J_t(X_t,\omega) &= \inf_{U^b_t \in\reals^{M^b}}(E_t\tilde Q_t)(X_t,U^b_t,\omega)
\end{split}
\end{equation}
for every $t=0,\ldots,T$ and $(X,U^b,U^a)\in(\reals^{N+M^b+M^a})^{T+1}$. We then say that the sequence $(J_t,\tilde Q_t)_{t=0}^T$ is a {\em normal solution} of \eqref{be}.

If the conditional expectations $E_t$ can be expressed as ``regular conditional expectations'', then the last equation in \eqref{be} can be expressed pointwise using the expression \eqref{eq:kernel}; see also \cite[Example~2.102]{pp24}. Substituting out the normal integrand $\tilde Q_t$, then shows that the normal integrands $J_t$ in \eqref{be} satisfy \eqref{bei} and, in particular, that the infimums in \eqref{bei} are measurable functions. Indeed, the pointwise infimum of a normal integrand is automatically measurable; see e.g.\ \cite[Corollary~1.23]{pp24}.

The three theorems below will be proved in Section~\ref{sec:oc}. The first one characterizes the optimum value and optimal solutions of \eqref{oc} in terms of the solutions of the Bellman equations \eqref{be}. In continuous time control theory, such theorems are sometimes called ``verification theorems''.  

Given a normal integrand $h:\reals^n\times\Omega\to\ereals$, the function $h^\infty$, obtained from $h$ by defining $h^\infty(\cdot,\omega)$, for each $\omega\in\Omega$, as the {\em recession function}\footnote{Given a lower semicontinuous extended real-valued convex function $g$ on $\reals^n$, its {\em recession function} is defined by
\[
g^\infty(x):=\lim_{\alpha\to\infty}\frac{g(\bar x+\alpha x)-g(\bar x)}{\alpha},
\]
where $\bar x\in\dom g$; see \cite[Theorem~8.5]{roc70a}.}
of $h(\cdot,\omega)$, is a positively homogeneous convex normal integrand; see e.g.~\cite[Example~14.54a]{pp24} or \cite[Theorem~1.39]{pp24}. We will assume that
\begin{equation}\label{eq:ass}
\{U_t^a \in \reals^{M^a}\mid L^\infty_{t-1}(0,0,U^a_t,\omega) \le 0,\ B_t^a(\omega) U^a_t = 0\}
\end{equation}
is linear for every $t=1,\dots,T$ and almost every $\omega$. This condition holds automatically under the conditions of \thref{existoc}; see \thref{rem:existoc}. We will use the notation $\N^s:=\{(x_t)_{t=0}^s\mid x\in\N\}$.

\begin{theorem}[Optimality principle]\thlabel{opoc}
If $(J_t,\tilde Q_t)_{t=0}^T$ is a lower bounded convex normal solution of~\eqref{be}, then the optimum value of~\eqref{oc} equals that of
\begin{align*}
  &\minimize & E\Bigg[\sum_{t=0}^{s-1}& L_t(X_t,U^b_t,U^a_{t+1})+ J_s(X_s)\Bigg]\ \ovr\ (X,U^b,U^a)\in\N^s,\\ 
  &\st\ & X_{t+1} &= F_t(X_t,U^b_t,U^a_{t+1})\quad t=0,\dots,s-1\ a.s.,
\end{align*}
for every $s=0,\ldots,T$, and an $(\bar X,\bar U^b,\bar U^a)\in\N$ solves~\eqref{oc} if and only if
\begin{align*}
  X_0&\in\argmin_{x_0\in\reals^N}J_0(x_0),\\
  \bar U^b_t&\in \argmin_{U^b_t\in\reals^{M^b}} (E_t\tilde Q_t)(X_t,U^b_t),\\
  \bar U^a_{t+1} &\in \argmin_{u^a_{t+1}\in\reals^{M^a}}\{L_t(\bar X_t,\bar U^b_t,u^a_{t+1}) + J_{t+1}(F_t(\bar X_t,\bar U^b_t,u^a_{t+1}))\},\\
  \bar X_{t+1} &= F_{t}(\bar X_t,\bar U^b_t,\bar U^a_{t+1})
\end{align*}
for all $t=0,\ldots,T-1$ almost surely.
\end{theorem}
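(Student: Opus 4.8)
The plan is to prove both assertions together from a single one-step comparison between consecutive truncated problems, driven by the two lower lines of the Bellman system~\eqref{be}. For a feasible $(X,U^b,U^a)\in\N$ and $s\in\{0,\dots,T\}$ put
\[
V_s:=E\Big[\sum_{t=0}^{s-1}L_t(X_t,U^b_t,U^a_{t+1})+J_s(X_s)\Big],
\]
so that $V_T$ is the objective of~\eqref{oc} and $V_0=J_0(X_0)$, a constant since $\F_0$ is trivial. The crux is the estimate $V_s\ge V_{s-1}$, obtained as follows: using the system equation $X_s=F_{s-1}(X_{s-1},U^b_{s-1},U^a_s)$ and the second line of~\eqref{be} (the infimum over $U^a_s$),
\[
L_{s-1}(X_{s-1},U^b_{s-1},U^a_s)+J_s(X_s)\ge \tilde Q_{s-1}(X_{s-1},U^b_{s-1})\quad a.s.;
\]
taking $E$, invoking the identity $(E_{s-1}\tilde Q_{s-1})(X_{s-1},U^b_{s-1})=E_{s-1}[\tilde Q_{s-1}(X_{s-1},U^b_{s-1})]$ valid for the $\F_{s-1}$-measurable argument $(X_{s-1},U^b_{s-1})$, and then the third line of~\eqref{be}, gives $E[\tilde Q_{s-1}(X_{s-1},U^b_{s-1})]\ge E[J_{s-1}(X_{s-1})]$, whence $V_s\ge V_{s-1}$. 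Lower boundedness ensures, via \cite[Theorem~2.13]{pp24}, that every conditional expectation here exists and that the manipulations are meaningful.

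For the value equivalence let $v_s$ denote the optimum value of the $s$-truncated problem. Truncating any feasible $s$-process yields a feasible $(s-1)$-process of no larger $(s-1)$-objective, so $V_s\ge V_{s-1}\ge v_{s-1}$ and hence $v_s\ge v_{s-1}$. For the reverse I extend a feasible $(s-1)$-process by one stage: by the measurable-selection theory for normal integrands, choose an $\F_{s-1}$-measurable $\eps$-minimizer $U^b_{s-1}$ of $U^b\mapsto(E_{s-1}\tilde Q_{s-1})(X_{s-1},U^b)$ and then an $\F_s$-measurable $\eps$-minimizer $U^a_s$ of $u^a\mapsto L_{s-1}(X_{s-1},U^b_{s-1},u^a)+J_s(F_{s-1}(X_{s-1},U^b_{s-1},u^a))$, and set $X_s:=F_{s-1}(X_{s-1},U^b_{s-1},U^a_s)$. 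The resulting process lies in $\N$, is feasible, and has $s$-objective at most $V_{s-1}+2\eps$; letting $\eps\downarrow 0$ gives $v_s\le v_{s-1}$. Thus all $v_s$ coincide, proving the first assertion, and in particular they all equal $\inf_{x_0}J_0(x_0)$.

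For the optimality characterization observe that, for every feasible process, $\inf_{x_0}J_0(x_0)\le V_0\le V_1\le\dots\le V_T$, while the first assertion identifies the optimum value of~\eqref{oc} with $\inf_{x_0}J_0(x_0)$. Hence a feasible process is optimal if and only if this entire chain collapses to equalities. The equality $V_0=\inf_{x_0}J_0(x_0)$ is precisely $X_0\in\argmin_{x_0}J_0(x_0)$. Each equality $V_s=V_{s-1}$ forces the two inequalities above to be tight; since in each the left integrand dominates the right one pointwise a.s., equality of the expectations upgrades to a.s.\ equality of the integrands, yielding $U^b_{s-1}\in\argmin_{U^b}(E_{s-1}\tilde Q_{s-1})(X_{s-1},U^b)$ and $U^a_s\in\argmin_{u^a}\{L_{s-1}(X_{s-1},U^b_{s-1},u^a)+J_s(F_{s-1}(X_{s-1},U^b_{s-1},u^a))\}$ almost surely. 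Writing $s=t+1$ recovers the stated conditions, the system equations holding by feasibility; conversely, these conditions make every step an equality, so the process attains $\inf_{x_0}J_0(x_0)$ and is optimal.

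The main obstacle is the reverse value inequality, that is, the one-stage extension, whose validity rests on producing the $\F_{s-1}$- and $\F_s$-measurable $\eps$-optimal selections; this is exactly where the normal-integrand apparatus is essential: measurability of $\tilde Q_{s-1}$ as a pointwise infimum, the conditional-expectation substitution identity, and existence of measurable $\eps$-argmin selections. A secondary care point is the $\ereals$-valued nature of the costs: to form the differences $V_s-V_{s-1}$ and to pass from equal expectations to a.s.\ equalities without meeting $\infty-\infty$, one restricts attention to processes of finite objective and leans on the $L^1$ lower bounds to keep all conditional expectations well defined. These aside, the argument is routine; alternatively, the statement follows by specializing the abstract Lagrangian dynamic-programming results of Section~\ref{sec:dim}, which encapsulate precisely these measurability and integrability facts.
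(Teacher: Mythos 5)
Your argument is correct in substance but follows a genuinely different route from the paper. The paper never argues directly on \eqref{oc}: it embeds the control problem into the Lagrangian format \eqref{L2} by defining $K_t$ as the partial infimum \eqref{eq:ock} over $U^a_t$ subject to the system equation, and then specializes \thref{opd} (itself a reduction of \thref{opl}, which invokes \cite[Theorem~2.106]{pp24}). In that route the standing linearity assumption on \eqref{eq:ass} does real work: via \cite[Theorem~1.40]{pp24} it guarantees both that $K_t$ is a normal integrand and that the infimum over $U^a$ in \eqref{eq:ock} is attained, which is what lets the optimal $\bar U^a_{t+1}$ be recovered by measurable selection and makes the argmin condition on $X_{t+1}$ in \thref{opd} equivalent to the stated argmin condition on $U^a_{t+1}$. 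Your telescoping argument --- the chain $\inf_{x_0}J_0(x_0)\le V_0\le\cdots\le V_T$ obtained from the two pointwise inequalities encoded in \eqref{be} together with the tower property, the value identity via one-stage extension, and optimality characterized as collapse of the chain --- is self-contained, sidesteps attainment by using measurable $\eps$-minimizers, and uses only the normality and lower boundedness of $(J_t,\tilde Q_t)$ already contained in the hypothesis. What the paper's route buys is uniformity, since the same embedding also drives \thref{existoc} and \thref{dimxioc}; what yours buys is transparency and independence from the abstract results of \cite{pp24}. Two points deserve the care you already gesture at: (i) upgrading $V_s=V_{s-1}$ to a.s.\ equality of the integrands needs the intermediate expectations finite, which your restriction to finite-objective processes plus the $L^1$ lower bounds supplies, though the ``only if'' direction degenerates when the optimum value is $+\infty$ (a pathology the paper's formulation shares); (ii) the $\F_{s-1}$- and $\F_s$-measurable $\eps$-argmin selections must be justified by the selection theorems for normal integrands (e.g.\ \cite[Theorem~14.37]{rw98}), including on the event where the relevant infimum is $+\infty$. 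Neither is a gap in the idea.
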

The optimality condition in \thref{opoc} characterizes the optimal solutions of \eqref{oc}, for each time $t$ and state $\omega$ as solutions of a two-stage convex optimization problems. This is a major simplification over \eqref{oc} where one optimizes over adapted processes $(X,U^b,U^a)$ over $T$ stages. The above, of course, requires knowledge of the Bellman functions $J_t$ which are normal integrands on $\reals^N\times\Omega$. While in \eqref{oc}, one optimizes at each stage over $(X_t,U^b_t,U^a_t)$, the Bellman functions $J_t$ only depend on the system state $X_t$. When the problem does not depend on $U^a$, \thref{opoc} reduces to \cite[Theorem~2.91]{pp24}, where the optimality condition concerning $X_0$ was omitted by mistake.

The following gives sufficient conditions for the existence of solutions to the Bellman equations \eqref{be}. 

\begin{theorem}[Existence of solutions]\thlabel{existoc}
Assume that the set
\begin{multline*}
\{(X,U^b,U^a)\in\N\mid \sum_{t=0}^{T-1} L^\infty_t(X_t,U^b_t,U^a_{t+1})+J^\infty(X_T)\le 0,\\
X_t=A_tX_{t-1} + B^b_tU^b_{t-1} + B^a_tU^a_t\ t=1,\dots,T\ a.s.\}
\end{multline*}
is linear. Then \eqref{oc} has a solution $(\bar X,\bar U^b,\bar U^a)\in\N$ and the Bellman equations \eqref{be} have a unique lower bounded convex normal solution $(J_t,\tilde Q_t)_{t=0}^T$.
\end{theorem}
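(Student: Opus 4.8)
The plan is to treat the two assertions together, deriving both from a single backward induction that builds the functions $(J_t,\tilde Q_t)$ and simultaneously yields the inf-compactness needed for attainment. The core is to show that, under the stated linearity of the global recession set, each operation appearing in \eqref{be} — the partial minimization over $U^a_{t+1}$, the conditional expectation $E_t$, and the partial minimization over $U^b_t$ — preserves the class of lower bounded convex normal integrands and produces functions whose infima are attained. Existence and uniqueness of the Bellman solution then follow by induction, and existence of an optimal $(\bar X,\bar U^b,\bar U^a)$ follows by combining attainment with measurable selection and invoking \thref{opoc}. This is the route by which the statement becomes an instance of the abstract existence theory developed in the later sections and in \cite{pp24}.

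For the induction I would set $J_T:=J$ and, assuming $J_{t+1}$ is a lower bounded convex normal integrand, form $g_t(X_t,U^b_t,U^a_{t+1},\omega):=L_t(X_t,U^b_t,U^a_{t+1},\omega)+J_{t+1}(F_t(X_t,U^b_t,U^a_{t+1},\omega),\omega)$. Since $F_t$ is affine and measurable in $(X_t,U^b_t,U^a_{t+1})$, the composition $J_{t+1}\circ F_t$ is convex normal, so $g_t$ is convex normal and lower bounded by the sum of the lower bounds of $L_t$ and $J_{t+1}$. Defining $\tilde Q_t$ as the infimum of $g_t$ over $U^a_{t+1}$, lower boundedness is immediate (the infimum of a function bounded below by $m_t\in L^1$ is still $\ge m_t$), measurability is the infimum-projection property of normal integrands \cite[Corollary~1.23]{pp24}, and convexity is preserved by partial minimization. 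Because $g_t$ is lower bounded, $E_t\tilde Q_t$ exists, is unique, and is again a lower bounded convex $\F_t$-normal integrand \cite[Theorem~2.13]{pp24}, and $J_t$ is obtained from it by the analogous partial minimization over $U^b_t$. Uniqueness of the whole solution then follows because every operation in this chain is single-valued.

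The main obstacle is lower semicontinuity and properness, which partial minimization does not preserve in general: the infimum of a convex lsc function over a variable is lsc and attained only when no nonzero direction in that variable is a direction of recession along which the function fails to increase. The needed control is exactly what the global linearity hypothesis provides, but it must be propagated into a stagewise condition. Concretely, I would compute the recession function of $g_t$ and show that the hypothesis forces $\{U^a_{t+1}\mid g_t^\infty(0,0,U^a_{t+1},\omega)\le0\}$ to be linear for a.e.\ $\omega$; using $J_{t+1}^\infty\ge0$ (from lower boundedness), so that $B^a_{t+1}U^a_{t+1}$ is the only channel through which the composition can have nonpositive recession, this reduces (after the index shift $t\mapsto t+1$) to the linearity of the set in \eqref{eq:ass}, cf.\ \thref{rem:existoc}. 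Linearity of this recession set makes $\tilde Q_t$ proper, lsc and inf-attained; an analogous argument, now incorporating the conditioning, handles the partial minimization over $U^b_t$ that defines $J_t$. Verifying that the single global assumption yields all these stagewise recession conditions, and that they persist under $E_t$, is the technical heart of the argument.

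Finally, for existence of a minimizer of \eqref{oc} I would use the attainment built into the induction. The set $\argmin_{x_0}J_0(x_0)$ is nonempty because $J_0$ is a lower bounded convex lsc function whose recession set $\{J_0^\infty\le0\}$ is linear, and for each $t$ the parametric argmins $\argmin_{U^b_t}(E_t\tilde Q_t)(X_t,\cdot)$ and $\argmin_{U^a_{t+1}}g_t(X_t,U^b_t,\cdot)$ are nonempty by the same recession conditions and admit measurable, hence adapted, selections by measurable selection for normal integrands. Constructing $(\bar X,\bar U^b,\bar U^a)$ forward through these selections produces a process in $\N$ satisfying the optimality conditions of \thref{opoc}, which therefore solves \eqref{oc}. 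Alternatively, one could argue existence directly by extracting a limit of a minimizing sequence in $L^0$, the linearity of the recession set providing the inf-compactness that replaces the usual coercivity, but the selection route is cleaner given \thref{opoc}.
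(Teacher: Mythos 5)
Your overall architecture---a backward induction showing that partial minimization over $U^a$, conditional expectation, and partial minimization over $U^b$ each preserve lower bounded convex normal integrands, followed by measurable selection through the optimality conditions of \thref{opoc}---is a legitimate route; it is essentially what the machinery the paper invokes (\cite[Theorem~2.108]{pp24}, reached here through \thref{existl} and \thref{existd}) does internally. The paper itself takes a shorter path: it rewrites \eqref{oc} as an instance of \eqref{L2} via \eqref{eq:ock} and only checks that the linearity hypothesis of \thref{existoc} implies that of \thref{existd}.

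However, there is a genuine gap at exactly the point you call the technical heart. The recession set whose linearity you need at stage $t$ is
\[
\left\{U^a_{t+1}\midb L_t^\infty(0,0,U^a_{t+1},\omega)\le 0,\ J_{t+1}^\infty\bigl(B^a_{t+1}(\omega)U^a_{t+1},\omega\bigr)\le 0\right\},
\]
and this does \emph{not} reduce to the set \eqref{eq:ass}. Lower boundedness gives $L_t^\infty\ge 0$ and $J_{t+1}^\infty\ge 0$, so the two inequalities do decouple, but $J_{t+1}^\infty(y)\le 0$ is strictly weaker than $y=0$: the cost-to-go function $J_{t+1}$ may have unbounded level sets, in which case $\{y\mid J_{t+1}^\infty(y)\le 0\}$ is a nontrivial cone and the displayed set strictly contains \eqref{eq:ass}. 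So linearity of \eqref{eq:ass} alone does not deliver properness, lower semicontinuity and attainment for $\tilde Q_t$; what is actually needed is the linearity of stagewise sets involving the recession functions of the cost-to-go functions themselves (the sets $N_t$ in \eqref{eq:Lagrec}), and deriving these from the single global hypothesis is a nontrivial backward induction in which the linearity of the sets $\{J_{t+1}^\infty\le 0\}$ must be carried along as part of the inductive hypothesis---this is precisely the content of \cite[Theorem~2.108]{pp24}. Note also that \thref{rem:existoc} only asserts that the global condition \emph{implies} linearity of \eqref{eq:ass}, not that \eqref{eq:ass} suffices for the stagewise attainment; your sketch conflates the two directions. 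The same issue affects the $U^b_t$-minimization and the nonemptiness of $\argmin J_0$. Your uniqueness argument and the forward measurable-selection construction are fine once the stagewise linearity is actually established.
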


The linearity assumption in \thref{existoc} holds in particular if the lower level sets of the cost functions $L_t$ and $J$ are scenariowise bounded. In clasical models of financial mathematics, the linearity condition becomes the ``no-arbitrage condition'' which does not require boundedness; see \cite[Section~2.3.5]{pp24}.

\begin{remark}\thlabel{rem:existoc}
The condition of \thref{existoc} implies that the set
\begin{align*}
\{U^a_{t+1}\in L^0(\F_{t+1})\mid L^\infty_t(0,0,U^a_{t+1})\le 0,\ B^a_{t+1}U^a_{t+1} =0\ a.s.\}
\end{align*}
is linear for every $t$. By \cite[Corollary~1.74]{pp24}, this holds if and only if the set in \eqref{eq:ass} is linear for almost every $\omega$.
\end{remark}

So far, we have not assumed anything about the underlying randomness in \eqref{oc}. Consequently, the cost-to-go functions $J_t$ are merely $\F_t$-measurable normal integrands. The independence assumption made in \cite[Section~6]{ccdmt}, on the other hand, resulted in nonrandom functions cost-to-go functions. We can reduce the dimensionality of the Bellman equations with respect to randomness more generally as follows.

Given an $\reals^d$-valued random variable $\xi$, we say that another random variable $C$ depends on $\omega$ only through $\xi$ if there is a measurable function $\hat C$ on $\reals^d$ such that $C(\omega)=\hat C(\xi(\omega))$ almost surely. By the Doob-Dynkin lemma, this happens if and only if $C$ is $\sigma(\xi)$-measurable. Similarly, we say that a normal integrand $h:\reals^n\times\Omega\to\ereals$ depends on $\omega$ only through $\xi$ if there is a normal integrand $H:\reals^n\times\reals^d\to\ereals$ such that $h(x,\omega)=H(x,\xi(\omega))$ for all $x\in\reals^n$ almost surely. Here, we endow $\reals^d$ with the Borel $\sigma$-algebra $\B(\reals^d)$. By \cite[Corollary~1.34]{pp24} such a representation exists if and only if $h$ is a $\sigma(\xi)$-measurable normal integrand.

\begin{theorem}[Dimensionality with respect to scenarios]\thlabel{dimxioc}
Let $(J_t,\tilde Q_t)_{t=0}^T$ be a lower bounded convex normal solution of \eqref{be} and assume that $L_{t-1}$, $A_t$, $B^b_t$, $B^a_t$ and $W_t$ depend on $\omega$ only through random variables $(\theta_{t-1},\theta_t,\eta_t)$, where $\theta=(\theta)_{t=0}^T$ is an adapted Markov process and $\eta=(\eta_t)_{t=0}^T$ is an adapted sequence of random variables such that each $\eta_t$ is independent of $\theta$ and of $\eta_s$ for $s\ne t$. If $J$ depends on $\omega$ only through $\theta_T$, then $J_t$ depends on $\omega$ only through $\theta_t$. In particular, if $L_{t-1}$, $A_t$, $B^b_t$, $B^a_t$, $W_t$ and $J$ are independent of $\theta$, then $J_t$ is deterministic.
\end{theorem}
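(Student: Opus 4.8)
The plan is to argue by backward induction on $t$, showing that each $J_t$ is a $\sigma(\theta_t)$-measurable normal integrand, which by \cite[Corollary~1.34]{pp24} is the same as saying $J_t$ depends on $\omega$ only through $\theta_t$. The base case $t=T$ is exactly the hypothesis that $J_T=J$ depends on $\omega$ only through $\theta_T$. For the inductive step I would assume that $J_{t+1}$ depends on $\omega$ only through $\theta_{t+1}$ and process the two equations in \eqref{be} in turn.

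First I would handle the middle equation defining $\tilde Q_t$. Since $L_t$ together with the data $A_{t+1},B^b_{t+1},B^a_{t+1},W_{t+1}$ entering $F_t$ depend on $\omega$ only through $(\theta_t,\theta_{t+1},\eta_{t+1})$, and $J_{t+1}$ depends on $\omega$ only through $\theta_{t+1}$ by the induction hypothesis, the composite integrand $(X_t,U^b_t,U^a_{t+1})\mapsto L_t(X_t,U^b_t,U^a_{t+1})+J_{t+1}(F_t(X_t,U^b_t,U^a_{t+1}))$ is a $\sigma(\theta_t,\theta_{t+1},\eta_{t+1})$-measurable normal integrand. Partial minimization over $U^a_{t+1}$ preserves normality and this measurability, the pointwise infimum of a normal integrand being itself a normal integrand measurable with respect to the same $\sigma$-algebra, by \cite[Corollary~1.23]{pp24}. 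Hence $\tilde Q_t$ depends on $\omega$ only through $(\theta_t,\theta_{t+1},\eta_{t+1})$, and I may write $\tilde Q_t(\cdot,\omega)=\hat Q_t(\cdot,\theta_t(\omega),\theta_{t+1}(\omega),\eta_{t+1}(\omega))$ for a normal integrand $\hat Q_t$.

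The main step is to evaluate $E_t\tilde Q_t$ through the kernel representation \eqref{eq:kernel}, for which I need the regular $\F_t$-conditional distribution $\mu$ of the pair $(\theta_{t+1},\eta_{t+1})$. Here I would combine the two structural hypotheses. The Markov property of $\theta$ gives that the conditional law of $\theta_{t+1}$ given $\F_t$ is $\kappa(\theta_t,\cdot)$ for a transition kernel $\kappa$, hence a function of $\theta_t$ alone; and the independence of $\eta_{t+1}$ from $\theta$ and from $\eta_s$, $s\neq t+1$, gives that $\eta_{t+1}$ is independent of $\F_t$ jointly with $\theta_{t+1}$. Integrating out $\eta_{t+1}$ first (against its law $\nu$) and then applying the Markov property shows that $\mu(\omega,\cdot)=\kappa(\theta_t(\omega),\cdot)\otimes\nu$, which depends on $\omega$ only through $\theta_t(\omega)$. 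Substituting into \eqref{eq:kernel} yields $(E_t\tilde Q_t)(X_t,U^b_t,\omega)=\int\hat Q_t(X_t,U^b_t,\theta_t(\omega),y,z)\,\mu(\omega,d(y,z))$, whose right-hand side depends on $\omega$ only through $\theta_t(\omega)$; thus $E_t\tilde Q_t$ is $\sigma(\theta_t)$-measurable. Minimizing over $U^b_t$ in the last equation of \eqref{be} and invoking \cite[Corollary~1.23]{pp24} once more shows that $J_t$ is a $\sigma(\theta_t)$-measurable normal integrand, closing the induction. For the final assertion, if every datum is independent of $\theta$ the same computation with $\theta$ suppressed gives that $\tilde Q_t$ depends on $\omega$ only through $\eta_{t+1}$, which is independent of $\F_t$, so $E_t\tilde Q_t=E[\tilde Q_t]$ and hence $J_t$ are deterministic.

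The hard part will be the identification of $\mu$ as the product kernel $\kappa(\theta_t,\cdot)\otimes\nu$: one must check that integrating out $\eta_{t+1}$ is legitimate, which requires the independence of $\eta_{t+1}$ not merely from $\F_t$ but jointly from $\F_t$ and $\theta_{t+1}$, and that the Markov property is then applied to the $\eta_{t+1}$-averaged integrand. This is the point where the precise interplay of the two hypotheses is used, and it implicitly relies on $\F_t$ carrying no information about $(\theta_{t+1},\eta_{t+1})$ beyond what the driving variables $(\theta_s,\eta_s)_{s\le t}$ provide; I would make this containment explicit. A secondary technical point is to verify that partial infimization commutes with the passage to the reduced variables, i.e.\ that the normal integrand obtained by minimizing the reduced data agrees with the reduction $\hat Q_t$ of $\tilde Q_t$; this follows from the essential uniqueness of the representation of $\sigma(\xi)$-measurable normal integrands in \cite[Corollary~1.34]{pp24} but should be stated explicitly.
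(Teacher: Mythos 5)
Your proof is correct in substance, but it follows a genuinely different route from the paper's. The paper does not argue on the control format \eqref{be} at all: it observes that the dependence hypotheses on $L_{t-1},A_t,B^b_t,B^a_t,W_t$ transfer to the reduced integrands $K_t$ of Section~\ref{sec:dim}, so that \thref{dimxioc} is a one-line corollary of \thref{dimxidim}, which in turn is a direct consequence of the abstract Lagrangian result \autoref{dimxi}. In the proof of \autoref{dimxi} the backward induction is the same as yours, but the key step is handled differently: instead of identifying the regular $\F_t$-conditional distribution of $(\theta_{t+1},\eta_{t+1})$ as a product kernel $\kappa(\theta_t,\cdot)\otimes\nu$ and substituting into \eqref{eq:kernel}, the paper invokes a conditional-independence theorem for conditional expectations of normal integrands (\cite[Theorem~2.21]{pp24} with $\G=\F_t$, $\H=\sigma(\theta_t)$) to conclude directly that $E[\tilde Q_t\mid\F_t]=E[\tilde Q_t\mid\sigma(\theta_t)]$. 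Your explicit kernel computation buys a self-contained and more concrete argument (and makes visible exactly where the Markov property and the independence of $\eta$ interact, a point you rightly flag as needing the joint, not merely pairwise, independence of $\eta_{t+1}$ from $\F_t\vee\sigma(\theta_{t+1})$); the paper's route buys brevity and a single proof covering all three problem formats, at the cost of outsourcing precisely that conditional-independence verification to the cited theorem. Two minor points: your citation of \cite[Corollary~1.23]{pp24} covers only measurability of the pointwise infimum, whereas the claim that the partial infimum over $U^a_{t+1}$ is again a normal integrand in the remaining variables rests on the standing recession condition \eqref{eq:ass} via \cite[Theorem~1.40]{pp24}; and since the theorem already assumes $(J_t,\tilde Q_t)$ is a normal solution, you only need the $\sigma(\theta_t,\theta_{t+1},\eta_{t+1})$-measurability of $\tilde Q_t$, not its normality, so this is a citation issue rather than a gap.
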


\begin{remark}\label{rem:hd}
Defining $Q_t:=E_t\tilde Q_t$, we can write \eqref{be} in hazard-decision format as
\begin{equation*}
\begin{split}
Q_T(X_T,U^b_T) &= J(X_T),\\
\tilde Q_t(X_t,U^b_t) &= \inf_{(U^a_{t+1},\tilde U_{t+1})\in\reals^{M^a+M^b}}\{L_t(X_t,U^b_t,U^a_{t+1})  + Q_{t+1}(F_t(X_t,U^b_t,U^a_{t+1}),\tilde U_{t+1})\},\\
Q_t&=E_t\tilde Q_t.
\end{split}
\end{equation*}
This can be interpreted as the Bellman equations for the hazard-decision problem with state $(X_t,U^b_t)$, control $(U^a_t,\tilde U_t)$ and system equations
\[
\begin{bmatrix}X_{t+1}\\
  U^b_{t+1}
\end{bmatrix}
= \begin{bmatrix}
  F_t(X_t,U^b_t,U^a_{t+1})\\
  \tilde U_{t+1}
  \end{bmatrix}.
\]
Such a reformulation of \eqref{oc} corresponds to the hazard-decision reformulation suggested in \cite{svlv} and \cite{dow20} in the context of the Stochastic Dual Dynamic Programming (SDDP) algorithm. One should note that the dimensionality of the Bellman equations in the hazard-decision reformulation above is $N+M^b$ while in the decision-hazard-decision formulation it is $N$. The dimension of the state may have a significant effect on the performance of e.g.\ the SDDP algorithm which is based on Kelley's cutting plane algorithm whose performance deteriorates quickly with problem dimension; see \cite[Section~3.3.2]{nes18}. This motivates the Bellman equations in the decision-hazard-decision format \eqref{be}
where the Bellman functions $J_t$ depend only on the $N$-dimensional system state $X_t$. Section~\ref{sec:sddp} below outlines a version of the SDDP algorithm applicable to \eqref{be}.
\end{remark}

\section{Dynamic programming in problems of Lagrange}\label{sec:L}

This section studies an abstract stochastic optimization model inspired by calculus of variations; see \cite{rw83} or \cite{pp24} and their references. This format does not split the decision variables to states and controls as in \eqref{oc} so the cost-to-go function at time $t$ will be a function of all the decision variables chosen at time $t$. The format covers many more specific formats studied in stochastic programming and, in particular, in the context of the stochastic dual dynamic programming algorithm; see e.g.~\cite{pp91}.

Consider the problem
\begin{equation}\label{L}
\minimize\quad E\left[\sum_{t=1}^T k_t(x_{t-1},x_t) + Q(x_T)\right]\quad\ovr x\in\N.
\end{equation}
where $k_t:\reals^d\times\reals^d\times\Omega\to\ereals$ are $\F_t$-measurable and $Q_T:\reals^d\times\Omega\to\ereals$ is $\F_T$-measurable lower bounded convex normal integrands. It is clear that the terminal cost $Q$ is redundant in the sense that one could simply redefine $k_T$ by adding $Q$ to it. We have included it mainly for notational convenience in the applications of the following sections. When the functions $k_t$ are of the form
\begin{equation}\label{eq:fl}
  k_t(x_{t-1},x_t,\omega) = \begin{cases}
  l_t(x_t,\omega) & \text{if $(x_t,x_{t-1})\in C_t(\omega)$},\\
    +\infty & \text{otherwise},
\end{cases}
\end{equation}
we recover the model studied in \cite{fl23}. Here $l_t$ is an $\F_t$-measurable normal integrand and $C_t$ is an $\F_t$-measurable closed convex set. On the other hand, if
\begin{equation}\label{eq:ls}
k_t(x_{t-1},x_t,\omega) = \begin{cases}
  f_t(x_t,\omega) & \text{if $x_t\in D_t(\omega)$ and $B_t(\omega)x_t+A_t(\omega)x_{t-1}=b_t(\omega)$},\\
  +\infty & \text{otherwise},
\end{cases}
\end{equation}
we recover the model studied e.g.\ in \cite[Chapter~3]{sdr21}. Here $f_t$ is an $\F_t$-measurable convex normal integrand, $D_t$ is an $\F_t$-measurable closed convex set and $A_t$, $B_t$, $b_t$ are $\F_t$-measurable matrices of an appropriate dimensions. Indeed, by \cite[Theorem~1.20]{pp24}, the function given by \eqref{eq:ls} is a convex normal integrand. More examples will be given in the following sections. In particular, we will see in Section~\ref{sec:oc} that the stochastic control model in the introduction is a special case of problem~\eqref{L}.

Consider the following Bellman equations
\begin{equation}\label{bel}
  \begin{split}
    Q_T&=Q,\\
    \tilde Q_t(x_t) &= \inf_{x_{t+1}\in\reals^d}\{k_{t+1}(x_t,x_{t+1}) + Q_{t+1}(x_{t+1})\},\\
    Q_t &= E_t\tilde Q_t.
  \end{split}
\end{equation}
The following characterizes the optimum value and optimal solutions of \eqref{L} in terms of the solutions $(Q_t)_{t=0}^T$ of \eqref{bel}. Recall that $\N^s:=\{(x_t)_{t=0}^s\mid x\in\N\}$.

\begin{theorem}[Optimality principle]\thlabel{opl}
Let $(Q_t,\tilde Q_t)_{t=0}^T$ be a lower bounded convex solution of \eqref{bel}. Then the optimum value of \eqref{L} equals that of
\[
\minimize\quad E\left[\sum_{t=1}^sk_t(x_{t-1},x_t) + Q_s(x_s)\right]\quad\ovr x\in\N^s
\]
for all $s=0,\dots,T$ and, moreover, an $\bar x\in\N$ solves \eqref{L} if and only if
\begin{align*}
\bar x_0&\in\argmin_{x_0\in\reals^d}Q_0(x_0),\\
\bar x_t&\in\argmin_{x_t\in\reals^d}\{k_t(\bar x_{t-1},x_t) + Q_t(x_t)\}\quad t=1,\ldots,T,
\end{align*}
almost surely.
\end{theorem}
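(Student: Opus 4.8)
The plan is to prove the value-equality and the optimality characterization simultaneously by backward induction on $s$, peeling off one stage at a time using the Bellman recursion \eqref{bel}. Write
\[
V_s := \inf\left\{ E\left[\sum_{t=1}^s k_t(x_{t-1},x_t) + Q_s(x_s)\right] \midb x\in\N^s\right\}
\]
for the optimum value of the $s$-stage truncated problem, so that $V_T$ is the value of \eqref{L} and the claim is $V_s = V_T$ for all $s$. The base case $s=T$ is the definition of $V_T$. For the inductive step, assuming $V_{s+1}=V_T$, the goal is to show $V_s = V_{s+1}$; combined, these give $V_s=V_T$ for every $s$, and the final descent to $s=0$ yields $V_0 = \inf_{x_0} E[Q_0(x_0)] = \inf_{x_0}Q_0(x_0)$ (the last equality because $\F_0$ is trivial, so $\F_0$-measurable functions are a.s.\ constant and $x_0$ ranges over $\reals^d$).

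First I would establish the single-stage reduction $V_{s+1}=V_s$. Starting from the $(s+1)$-stage problem, I would split the terminal objective as
\[
E\left[\sum_{t=1}^{s} k_t(x_{t-1},x_t)\right] + E\left[k_{s+1}(x_s,x_{s+1}) + Q_{s+1}(x_{s+1})\right],
\]
and interchange the infimum over $x_{s+1}$ (an $\F_{s+1}$-measurable free variable) with the expectation. This scenariowise/interchange step is the technical heart: it uses a measurable-selection / interchange-of-infimum-and-integral result for normal integrands to justify
\[
\inf_{x_{s+1}\in L^0(\F_{s+1})} E\left[k_{s+1}(x_s,x_{s+1}) + Q_{s+1}(x_{s+1})\right] = E\left[\inf_{x_{s+1}\in\reals^d}\{k_{s+1}(x_s,x_{s+1}) + Q_{s+1}(x_{s+1})\}\right] = E[\tilde Q_s(x_s)],
\]
by the middle line of \eqref{bel}. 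Here the lower boundedness hypothesis on the solution is what licenses the interchange (it guarantees the conditional-expectation normal integrand $E_s\tilde Q_s$ exists and is well-defined, cf.\ the existence result quoted after \eqref{eq:kernel}). Then, since $x_s$ is $\F_s$-measurable and the remaining summands $k_1,\dots,k_s$ are $\F_s$-measurable, I would replace $E[\tilde Q_s(x_s)]$ by $E[(E_s\tilde Q_s)(x_s)] = E[Q_s(x_s)]$ using the defining property of the conditional-expectation integrand together with the tower property. This turns the $(s+1)$-stage objective into exactly the $s$-stage objective, giving $V_{s+1}=V_s$.

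For the optimality characterization I would track equality throughout the reduction. An $\bar x\in\N$ is optimal for \eqref{L} iff it attains $V_T$; propagating the chain of equalities above, $\bar x$ is optimal iff at each stage the relevant infimum is attained by $\bar x$. Concretely, attainment of the interchange step at stage $t$ forces $\bar x_t\in\argmin_{x_t}\{k_t(\bar x_{t-1},x_t)+Q_t(x_t)\}$ almost surely (the scenariowise minimizer characterization for the interchanged integral functional), and the final descent gives $\bar x_0\in\argmin_{x_0}Q_0(x_0)$. Conversely, if $\bar x$ satisfies all these pointwise $\argmin$ conditions, reading the equalities backward shows it attains $V_T$, hence is optimal. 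The main obstacle is the measurability bookkeeping in the interchange step — ensuring the pointwise infimum $\tilde Q_s$ is itself a normal integrand and that attainment of the integrated infimum is equivalent to scenariowise attainment — but this is exactly the content of the normal-integrand machinery (measurability of pointwise infima, \cite[Corollary~1.23]{pp24}) and the existence/uniqueness of conditional-expectation integrands cited in the excerpt, so no compactness is needed.
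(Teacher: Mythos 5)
Your plan is correct, but it takes a genuinely different route from the paper. The paper proves \thref{opl} in three lines by reformulating \eqref{L} in the generic parametric format of \cite[Theorem~2.106]{pp24}, setting $K_t(x_t,u_t,\omega):=k_t(x_t-u_t,x_t,\omega)$ (with the terminal cost absorbed into $K_T$), checking via \cite[Theorem~1.20]{pp24} that these are lower bounded convex normal integrands, and citing the general theorem; all of the induction, interchange, and attainment bookkeeping is delegated to that reference. You instead reprove the content of that theorem directly for this model: backward induction on the truncation horizon $s$, splitting off the last stage, applying the interchange rule \cite[Theorem~14.60]{rw98} over $L^0(\F_{s+1})$ to identify the inner infimum with $E[\tilde Q_s(x_s)]$, passing to $E[Q_s(x_s)]$ by the tower property and the defining identity $(E_s\tilde Q_s)(x_s)=E[\tilde Q_s(x_s)\mid\F_s]$, and then tracking where equality is attained to extract the scenariowise $\argmin$ conditions. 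The chain of inequalities you propagate for the optimality characterization is the right mechanism, and your appeal to lower boundedness is exactly what makes the expectations well defined (no $\infty-\infty$ when splitting sums, existence of $E_s\tilde Q_s$, and the unconditional validity of the interchange and of its a.s.\ attainment characterization, which would otherwise fail at the value $-\infty$). What the paper's route buys is brevity and the automatic handling of the degenerate cases (e.g.\ intermediate expectations equal to $+\infty$) inside the cited theorem; what your route buys is a self-contained argument that exposes the dynamic-programming mechanism, at the cost of having to be careful with precisely those edge cases, which your plan flags but does not fully write out. One small remark: you worry about showing that the pointwise infimum $\tilde Q_s$ is a normal integrand, but in \thref{opl} this is part of the hypothesis (a solution of \eqref{bel} consists of normal integrands), so nothing needs to be proved there.
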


\begin{proof}
We apply \cite[Theorem~2.106]{pp24} with
\begin{align*}
  K_0(x_0,u_0,\omega):=& 0,\\
  K_t(x_t,u_t,\omega) :=& k_t(x_t-u_t,x_t,\omega)\quad t=1,\ldots,T-1,\\
  K_T(x_t,u_t,\omega) :=& k_T(x_T-u_T,x_T,\omega)+Q(x_T,\omega).
\end{align*}
By \cite[Theorem~1.20]{pp24}, the functions $K_t$ are normal integrands. It is clear that the functions $K_t$ are also lower boundedness and convex since we have assumed that $k_t$ are so. The claims thus follow from \cite[Theorem~2.106]{pp24}.
\end{proof}

Choosing $s=0$, the first part of \thref{opl} combined with the interchange rule in \cite[Theorem~14.60]{rw98} imply that the optimum value of \eqref{L} equals
\[
\inf\{E[Q_0(x_0)]\mid x_0\in L^0(\Omega,\F_0,P;\reals^{n_0})\} = E\inf_{x_0\in\reals^{n_0}}Q_0(x_0).
\]

\begin{theorem}[Existence of solutions]\thlabel{existl}
Assume that the set
\[
\{x\in\N\mid \sum_{t=1}^T k^\infty_t(x_{t-1},x_t)+Q^\infty(x_T)\le 0\ a.s.\}
\]
is linear. Then \eqref{L} has a solution $\bar x\in\N$, the Bellman equations \eqref{bel} have a unique lower bounded convex normal solution $(Q_t,\tilde Q_t)_{t=0}^T$ and the sets
\begin{equation}\label{eq:Lagrec}
N_t(\omega):=\{x_t\in\reals^d \mid k_t^\infty(0,x_t,\omega) + Q_t^\infty(x_t,\omega)\le 0\}
\end{equation}
are linear for every $t$ and almost every $\omega$.
\end{theorem}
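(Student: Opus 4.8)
The plan is to deduce \thref{existl} from the existence theory of \cite{pp24} through the same reduction employed in the proof of \thref{opl}. I would again put $K_0(x_0,u_0,\omega):=0$, $K_t(x_t,u_t,\omega):=k_t(x_t-u_t,x_t,\omega)$ for $t=1,\dots,T-1$ and $K_T(x_T,u_T,\omega):=k_T(x_T-u_T,x_T,\omega)+Q(x_T,\omega)$, which are lower bounded convex normal integrands by \cite[Theorem~1.20]{pp24}. With these data and the increment relation $u_t=x_t-x_{t-1}$, problem \eqref{L} is an instance of the abstract model to which \cite[Theorem~2.106]{pp24} and its existence counterpart apply, and \eqref{bel} is the associated system of Bellman equations after the auxiliary control $u_t$ is eliminated. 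It therefore suffices to match the linearity hypothesis of \thref{existl} with the recession-linearity hypothesis of the book's existence theorem and then read off the three conclusions.

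The first concrete step is a recession computation. Since $(x_t,u_t)\mapsto(x_t-u_t,x_t)$ is linear, the rule $(g\circ L)^\infty=g^\infty\circ L$ for proper lsc convex $g$ gives $K_t^\infty(x_t,u_t,\omega)=k_t^\infty(x_t-u_t,x_t,\omega)$ for $t<T$ and $K_T^\infty(x_T,u_T,\omega)=k_T^\infty(x_T-u_T,x_T,\omega)+Q^\infty(x_T,\omega)$, the terminal stage using additivity of the recession operation. Substituting $u_t=x_t-x_{t-1}$ then identifies the set $\{x\in\N\mid\sum_{t=1}^T K_t^\infty(x_t,x_t-x_{t-1})\le0\ a.s.\}$ from the book's hypothesis with the set $\{x\in\N\mid\sum_{t=1}^T k_t^\infty(x_{t-1},x_t)+Q^\infty(x_T)\le0\ a.s.\}$ assumed linear here. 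Consequently the book's existence theorem applies and delivers a solution $\bar x\in\N$ of \eqref{L} together with a unique lower bounded convex normal solution of its Bellman equations; eliminating $u_t$ turns the latter into \eqref{bel}, and uniqueness descends because every lower bounded convex normal solution of \eqref{bel} lifts to one in the $K$-variables.

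It remains to show the sets $N_t$ in \eqref{eq:Lagrec} are linear, which I would do by backward induction on $t$, carried out jointly with the closedness and lower boundedness of the $\tilde Q_t$ and $Q_t$. At stage $T$ one has $Q_T^\infty=Q^\infty$. Assuming the statement down to $t+1$, the per-stage linearity available at stage $t+1$ makes the inf-projection in \eqref{bel} exact on recession functions, so that $\tilde Q_t^\infty(x_t)=\inf_{x_{t+1}}\{k_{t+1}^\infty(x_t,x_{t+1})+Q_{t+1}^\infty(x_{t+1})\}$, while $Q_t^\infty=E_t\tilde Q_t^\infty$ by commutation of the recession operation with the conditional expectation of lower bounded normal integrands. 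Since $k_t^\infty(0,x_t,\omega)=K_t^\infty(x_t,x_t,\omega)$, the set $N_t$ is exactly the scenariowise per-stage recession cone of the reduced problem at time $t$, and the local linearity needed to close the induction is extracted from the global linearity hypothesis by the same device as in \thref{rem:existoc}, namely \cite[Corollary~1.74]{pp24}, which transfers linearity between a set of adapted random variables and its scenariowise sections.

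The step I expect to be the main obstacle is precisely this last induction, where the exactness of the recession of the inf-projection at stage $t$ is what one needs in order to control stage $t-1$, yet that exactness itself rests on the linearity being established. The resolution is to run the existence and closedness of $\tilde Q_t$, the recession identities, and the linearity of $N_t$ simultaneously from $t=T$ downward, so that each is available one stage before it is used. Matching the two linearity hypotheses via $K_t^\infty=k_t^\infty\circ L$ and commuting recession with the conditional expectation are comparatively routine once the book's inf-projection and conditional-expectation calculus for normal integrands is invoked.
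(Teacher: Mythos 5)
Your proposal is correct and follows essentially the same route as the paper: the same lifting $K_0:=0$, $K_t(x_t,u_t,\omega):=k_t(x_t-u_t,x_t,\omega)$, $K_T:=k_T(x_T-u_T,x_T,\omega)+Q(x_T,\omega)$, the same recession computation (the paper cites \cite[Theorem~A.17]{pp24} where you invoke $(g\circ L)^\infty=g^\infty\circ L$), and the same matching of the linearity hypotheses before appealing to the existence theorem \cite[Theorem~2.108]{pp24}. The only difference is that for the linearity of the sets $N_t$ the paper simply combines the last statements of \cite[Theorem~2.108]{pp24} and \cite[Theorem~2.106]{pp24}, whereas you sketch the backward induction that underlies those statements; that is more work than needed here but not a different argument.
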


\begin{proof}
We apply  \cite[Theorem~2.108]{pp24} with 
\begin{align*}
  K_0(x_0,u_0,\omega):=& 0,\\
  K_t(x_t,u_t,\omega) :=& k_t(x_t-u_t,x_t,\omega)\quad t=1,\ldots,T-1,\\
  K_T(x_t,u_t,\omega) :=& k_T(x_T-u_T,x_T,\omega)+Q(x_T,\omega).
\end{align*}
By \cite[Theorem~A.17]{pp24},
\begin{align*}
  K^\infty_0(x_0,u_0,\omega):=& 0,\\
  K^\infty _t(x_t,u_t,\omega) :=& k^\infty_t(x_t-u_t,x_t,\omega)\quad t=1,\ldots,T-1,\\
  K_T(x_t,u_t,\omega) :=& k^\infty_T(x_T-u_T,x_T,\omega)+Q^\infty(x_T,\omega),
\end{align*}
so the linearity condition here implies that in \cite[Theorem~2.108]{pp24}. Thus, the second claim follows from \cite[Theorem~2.108]{pp24}. Combining the last statements of  \cite[Theorem~2.108]{pp24} and \cite[Theorem~2.106]{pp24} proves the remaining claim. 
\end{proof}

\begin{theorem}[Dimensionality with respect to scenarios]\label{dimxi}
Let $(Q_t,\tilde Q_t)_{t=0}^T$ be a lower bounded convex normal solution of the Bellman equations \eqref{bel} and assume that $k_t$ depends on $\omega$ only through random variables $(\theta_{t-1},\theta_t,\eta_t)$, where $\theta=(\theta)_{t=0}^T$ is an adapted Markov process and $\eta=(\eta_t)_{t=0}^T$ is an adapted sequence of random variables such that each $\eta_t$ is independent of $\theta$ and of $\eta_s$ for $s\ne t$. If $Q$ depends on $\omega$ only through $\theta_T$, then $Q_t$ depends on $\omega$ only through $\theta_t$. In particular, if $k_t$ and $Q$ are independent of $\theta$, then $Q_t$ is deterministic.
\end{theorem}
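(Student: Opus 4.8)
The plan is to prove, by backward induction on $t$, that $Q_t$ depends on $\omega$ only through $\theta_t$. The base case $t=T$ is the hypothesis that $Q=Q_T$ depends on $\omega$ only through $\theta_T$. For the inductive step, suppose $Q_{t+1}$ depends on $\omega$ only through $\theta_{t+1}$, say $Q_{t+1}(x,\omega)=\hat Q_{t+1}(x,\theta_{t+1}(\omega))$ for a convex normal integrand $\hat Q_{t+1}$.

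First I would treat the middle line of \eqref{bel}. By the inductive hypothesis and the assumption on $k_{t+1}$, the integrand $(x_t,x_{t+1},\omega)\mapsto k_{t+1}(x_t,x_{t+1},\omega)+Q_{t+1}(x_{t+1},\omega)$ is a $\sigma(\zeta_t)$-measurable convex normal integrand, where $\zeta_t:=(\theta_t,\theta_{t+1},\eta_{t+1})$; by \cite[Corollary~1.34]{pp24} it equals $G_t(x_t,x_{t+1},\zeta_t(\omega))$ for a convex normal integrand $G_t$ on a suitable product space. Since infimizing over $x_{t+1}$ commutes with the substitution $s=\zeta_t(\omega)$, and since the inf-projection of a normal integrand is again a normal integrand (its measurability coming from \cite[Corollary~1.23]{pp24}), the solution $\tilde Q_t$ admits the representation $\tilde Q_t(x,\omega)=\hat{\tilde Q}_t(x,\zeta_t(\omega))$ with $\hat{\tilde Q}_t(x,s):=\inf_{x_{t+1}}G_t(x,x_{t+1},s)$ a convex normal integrand. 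Thus $\tilde Q_t$ depends on $\omega$ only through $\zeta_t=(\theta_t,\theta_{t+1},\eta_{t+1})$.

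The crux is the third line $Q_t=E_t\tilde Q_t$. Applying the kernel representation \eqref{eq:kernel} with $\xi=\zeta_t$ and $\G=\F_t$ gives
\[
Q_t(x,\omega)=\int\hat{\tilde Q}_t(x,s)\,\mu_t(\omega,ds),
\]
where $\mu_t$ is the regular $\F_t$-conditional distribution of $\zeta_t$. It therefore suffices to show that $\mu_t(\omega,\cdot)$ depends on $\omega$ only through $\theta_t$. Since $\theta_t$ is $\F_t$-measurable, its conditional law is the Dirac mass at $\theta_t(\omega)$; since $\theta$ is an adapted Markov process, the conditional law of $\theta_{t+1}$ given $\F_t$ is a function of $\theta_t$ alone; and since $\eta_{t+1}$ is independent of $\F_t\vee\sigma(\theta_{t+1})$, its conditional law given $\F_t$ is its fixed marginal and the joint conditional law of $(\theta_{t+1},\eta_{t+1})$ factors as the product of the $\theta_t$-driven kernel for $\theta_{t+1}$ and that marginal. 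Hence $\mu_t$ is a $\sigma(\theta_t)$-measurable kernel, so $\mu_t(\omega,\cdot)=\hat\mu_t(\theta_t(\omega),\cdot)$ by Doob--Dynkin, and the display exhibits $Q_t$ as a function of $(x,\theta_t(\omega))$. As $Q_t=E_t\tilde Q_t$ is a lower bounded convex normal integrand, \cite[Corollary~1.34]{pp24} upgrades this to the required representation through a convex normal integrand, closing the induction. The final assertion is the degenerate case in which $\theta$ is a point mass, where ``depends on $\omega$ only through $\theta_t$'' means ``deterministic''.

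I expect the factorization of $\mu_t$ to be the main obstacle. The delicate point is to extract from the stated hypotheses the precise independence that is actually used, namely that $\eta_{t+1}$ is independent of the whole $\sigma$-algebra $\F_t\vee\sigma(\theta_{t+1})$ rather than merely pairwise independent of $\theta$ and of the individual $\eta_s$; only then does the conditioning genuinely collapse the $(\theta_{t+1},\eta_{t+1})$-randomness onto a kernel driven solely by $\theta_t$. A secondary care point is ensuring at each stage that the objects produced ($\hat{\tilde Q}_t$ and the integral against $\mu_t$) remain \emph{normal} integrands, so that \cite[Corollary~1.34]{pp24} is applicable; here I would lean on the lower boundedness and convexity built into the notion of a normal solution.
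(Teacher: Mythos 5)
Your proof is correct and follows essentially the same route as the paper's: backward induction in which $\tilde Q_t$ is seen to depend on $\omega$ only through $(\theta_t,\theta_{t+1},\eta_{t+1})$, the conditional expectation $E_t\tilde Q_t$ is then shown to be $\sigma(\theta_t)$-measurable, and \cite[Corollary~1.34]{pp24} gives the final representation. The only difference is that where you factorize the regular $\F_t$-conditional distribution of $(\theta_t,\theta_{t+1},\eta_{t+1})$ by hand via \eqref{eq:kernel}, the paper simply invokes \cite[Theorem~2.21]{pp24} to conclude $E[\tilde Q_t\mid\F_t]=E[\tilde Q_t\mid\sigma(\theta_t)]$; both arguments rest on the same conditional independence of $\sigma(\theta_{t+1},\eta_{t+1})$ and $\F_t$ given $\sigma(\theta_t)$ that you correctly identify as the real content of the independence hypotheses.
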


\begin{proof}
The claim clearly holds for $t=T$. Assume that it holds for $t+1$. The normal integrand $\tilde Q_t$ then depends on $\omega$ only through $(\theta_t,\theta_{t+1},\eta_{t+1})$. Applying \cite[Theorem~2.21]{pp24} with $\G:=\F_t$ and $\H:=\sigma(\theta_t)$ implies that $E[\tilde Q_t\mid\F_t]=E[\tilde Q_t\mid\sigma(\theta_t)]$. In particular, $Q_t$ is $\sigma(\theta_t)$-measurable so the claim follows from \cite[Corollary~1.34]{pp24}.
\end{proof}

\section{Dimensionality of the Bellman equations}\label{sec:dim}

An essential feature of the optimal control problem \eqref{oc} is that the control variables at a given stage affect the following stages only through the state variable $X_t$. Accordingly, the Bellman functions $J_t$ in \eqref{be} only depend on the state instead of all the decision variables of stage $t$ as in \eqref{bel}. Such a dimension reduction can be beneficial when numerically constructing approximations of the Bellman functions by e.g.\ cutting plane methods; see \cite{nes18}. The structure behind the dimension reduction generalizes as follows.

Consider again problem \eqref{L} and assume that the decision variables decompose as $x_t=(X_t,U_t)$ and that
\begin{align*}
  k_t(x_{t-1},x_t,\omega) &= K_t(X_{t-1},U_{t-1},X_t,\omega),\\
  Q(x_T) &= J(X_T),
\end{align*}
where $K_t$ is a lower bounded convex normal integrand on $\reals^N\times\reals^{M}\times\reals^N\times\Omega$ and $J$ is a lower bounded convex normal integrand on $\reals^N\times\Omega$. We can then write problem \eqref{L} as
\begin{equation}\label{L2}
\begin{aligned}
\minimize\quad E\left[\sum_{t=1}^T K_t(X_{t-1},U_{t-1},X_t) + J(X_T)\right]\quad\ovr (X,U)\in\N
\end{aligned}
\end{equation}
and the corresponding Bellman equations \eqref{bel} as
\begin{equation}\label{eq:beLdim}
\begin{aligned}
  Q_T(X_T,U_T) &= J(X_T),\\
  \tilde Q_t(X_t,U_t) &= \inf_{(X_{t+1},U_{t+1})\in\reals^{N+M}}\{K_{t+1}(X_t,U_t,X_{t+1}) + Q_{t+1}(X_{t+1},U_{t+1})\},\\
  Q_t &= E_t\tilde Q_t.
\end{aligned}
\end{equation}
If $(Q_t,\tilde Q_t)_{t=0}^T$ satisfy \eqref{eq:beLdim}, then the functions
\[
J_t(X_t) := \inf_{U_t\in\reals^M} Q_t(X_t,U_t)
\]
together with $\tilde Q_t$ satisfy the reduced Bellman equations
\begin{equation}\label{ber}
\begin{aligned}
J_T(X_T) &= J(X_T),\\
\tilde Q_t(X_t,U_t) &= \inf_{X_{t+1}\in\reals^N}\{K_{t+1}(X_t,U_t,X_{t+1}) + J_{t+1}(X_{t+1})\},\\
J_t(X_t) &= \inf_{U_t\in\reals^M}(E_t\tilde Q_t)(X_t,U_t).
\end{aligned}
\end{equation}
As in the control format of \eqref{be}, the Bellman functions $J_t$ above only depend on the state $X_t$, not on the control $U_t$. Section~\ref{sec:sddp} presents a version of the Stochastic Dual Dynamic Programming algorithm applicable to the above format. A version of the SDDP algorithm for problem \eqref{oc} is obtained as a special case. 


\begin{theorem}[Optimality principle]\thlabel{opd}
Let $(J_t,\tilde Q_t)_{t=0}^T$ be a lower bounded convex solution of \eqref{ber}. Then the optimum value of \eqref{L2} equals that of
\begin{align*}
\minimize\quad E\left[\sum_{t=1}^s K_t(X_{t-1},U_{t-1},X_t) + J_s(X_s)\right]\quad (X,U)\in\N^s
\end{align*}
for all $s=0,\dots,T$ and, moreover, an $(\bar X,\bar{U})\in\N$ solves \eqref{L2} if and only if
\begin{align*}
  \bar X_0 &\in\argmin_{X_0\in\reals^N}J_0(X_0),\\
  \bar{U}_t &\in\argmin_{U_t\in\reals^M}(E_t\tilde Q_t)(\bar X_t,U_t),\\
  \bar X_{t+1} &\in \argmin_{X_{t+1}\in\reals^N}\{K_{t+1}(\bar X_t,\bar U_t,X_{t+1}) + J_{t+1}(X_{t+1})\}
\end{align*}
for all $t=0,\ldots,T$ almost surely.
\end{theorem}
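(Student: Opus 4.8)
The plan is to deduce \thref{opd} from the optimality principle \thref{opl} for the Lagrangian format \eqref{L}, by regarding \eqref{L2} as the instance of \eqref{L} with $x_t=(X_t,U_t)$, $k_t(x_{t-1},x_t)=K_t(X_{t-1},U_{t-1},X_t)$ and $Q=J$. Since \thref{opl} is phrased in terms of a \emph{full} solution $(Q_t,\tilde Q_t)$ of \eqref{bel}, the first task is to manufacture such a solution from the given \emph{reduced} solution $(J_t,\tilde Q_t)$ of \eqref{ber}. The natural choice is to keep the $\tilde Q_t$ unchanged and set $Q_t:=E_t\tilde Q_t$, with the terminal convention $\tilde Q_T:=J$, so that $Q_T=E_T\tilde Q_T=J$ because $J$ is $\F_T$-measurable.

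First I would verify that this $(Q_t,\tilde Q_t)$ indeed solves \eqref{eq:beLdim}, i.e.\ the equations \eqref{bel} for the chosen instance. The terminal line and the identities $Q_t=E_t\tilde Q_t$ hold by construction, and the third line of \eqref{ber} reads $J_t(X_t)=\inf_{U_t}Q_t(X_t,U_t)$, which is exactly the defining relation between $J_t$ and $Q_t$ from Section~\ref{sec:dim}. The only genuine computation is the recursion for $\tilde Q_t$: starting from the middle line of \eqref{ber} and substituting $J_{t+1}(X_{t+1})=\inf_{U_{t+1}}Q_{t+1}(X_{t+1},U_{t+1})$, the iterated infimum over $X_{t+1}$ and then over $U_{t+1}$ collapses into the joint infimum over $(X_{t+1},U_{t+1})$, which is precisely the middle line of \eqref{eq:beLdim}. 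I would also record that $Q_t=E_t\tilde Q_t$ is again a lower bounded convex normal integrand, since the conditional expectation preserves both properties (see \cite[Theorem~2.13]{pp24}); thus $(Q_t,\tilde Q_t)$ is a lower bounded convex solution of \eqref{bel} and \thref{opl} applies.

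It then remains to translate the two conclusions of \thref{opl} into the statement of \thref{opd}. For the truncated problems, \thref{opl} gives the minimum with terminal cost $Q_s(X_s,U_s)$; since the free $\F_s$-measurable control $U_s$ enters the truncated objective only through this terminal term, the interchange rule \cite[Theorem~14.60]{rw98} lets me minimize it out, replacing $E[Q_s(X_s,U_s)]$ by $E[\inf_{U_s}Q_s(X_s,U_s)]=E[J_s(X_s)]$ and yielding exactly the truncated problem of \thref{opd}. For the optimality conditions, \thref{opl} characterizes $\bar x=(\bar X,\bar U)$ through the almost-sure pointwise relations $\bar x_0\in\argmin_{x_0}Q_0(x_0)$ and $\bar x_t\in\argmin_{(X_t,U_t)}\{K_t(\bar X_{t-1},\bar U_{t-1},X_t)+Q_t(X_t,U_t)\}$. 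Because $K_t$ does not depend on $U_t$, each such joint minimization over $(X_t,U_t)$ splits: minimizing first over $U_t$ turns $Q_t$ into $J_t$ and leaves the $X$-condition $\bar X_t\in\argmin_{X_t}\{K_t(\bar X_{t-1},\bar U_{t-1},X_t)+J_t(X_t)\}$, while the residual $U$-condition is $\bar U_t\in\argmin_{U_t}Q_t(\bar X_t,U_t)=\argmin_{U_t}(E_t\tilde Q_t)(\bar X_t,U_t)$; for $t=0$ the $X$-part reduces to $\bar X_0\in\argmin_{X_0}J_0(X_0)$. These are exactly the conditions listed in \thref{opd}.

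I expect the only real obstacle to be bookkeeping: keeping the index ranges and the terminal convention for $\tilde Q_T$ consistent, and checking that the scenariowise splitting of the joint $\argmin$ over $(X_t,U_t)$ is an exact equivalence rather than merely one inclusion. The latter holds because $J_t(X_t)=\inf_{U_t}Q_t(X_t,U_t)$ is attained precisely at the minimizing $\bar U_t$, so that the decomposed pair of conditions is jointly necessary and sufficient for membership in the joint $\argmin$. Everything else is a direct application of \thref{opl} together with the interchange rule.
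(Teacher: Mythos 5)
Your proposal is correct and follows essentially the same route as the paper's proof: define $Q_t:=E_t\tilde Q_t$, check that $(Q_t,\tilde Q_t)$ solves \eqref{eq:beLdim}, apply \thref{opl}, eliminate $U_s$ from the terminal cost via the interchange rule, and split the joint $\argmin$ over $(X_t,U_t)$ using partial minimization. The only difference is that you verify the passage from \eqref{ber} to \eqref{eq:beLdim} in more detail than the paper, which simply asserts it.
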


\begin{proof}
The functions $Q_t:=E_t[\tilde Q_t]$ satisfy \eqref{eq:beLdim} so the first claim of \thref{opl} says that the optimum value of \eqref{L2} equals
\begin{align*}
  \inf_{x\in\N^s}E\left[\sum_{t=1}^sk_t(x_{t-1},x_t) + Q_s(x_s)\right] &= \inf_{(X,U)\in\N^s}E\left[\sum_{t=1}^s K_t(X_{t-1},U_{t-1},X_t) + Q_s(X_s,U_s)\right]\\
  &= \inf_{(X,U)\in\N^s}E\left[\sum_{t=1}^s K_t(X_{t-1},U_{t-1},X_t) + J_s(X_s)\right]
\end{align*}
where the second equality follows from the interchange rule in \cite[Theorem~14.60]{rw98}. Clearly, $(\bar X_0,\bar U_0)$ minimizes $Q_0$ if and only if $\bar X_0$ minimizes $J_0$ and $\bar U_0$ minimizes $Q_0(\bar X_0,\cdot)$. Similarly, $(\bar X_t,\bar U_t)$ minimizes the function
\[
f_t(X_t,U_t):=k_t(\bar x_{t-1},x_t) + Q_t(x_t) = K_t(\bar X_{t-1},\bar U_{t-1},X_t) + Q_t(X_t,U_t)
\]
if and only if $\bar X_t$ minimizes the function
\[
\varphi_t(X_t):=\inf_{U_t\in\reals^M}f_t(X_t,U_t) = K_t(\bar X_{t-1},\bar U_{t-1},X_t) + J_t(X_t)
\]
and $\bar U_t$ minimizes $Q_t(\bar X_t,\cdot)$. The second claim thus follows from that of \thref{opl}.
\end{proof}

As in the general Lagrangian format of Section~\ref{sec:L}, the first claim of \thref{opd} implies that the optimum value of \eqref{L2} equals
\[
\inf_{X_0\in\reals^N}E[J_0(X_0)].
\]

\begin{theorem}[Existence of solutions]\thlabel{existd}
Assume that the set
\[
\{(X,U)\in\N\mid \sum_{t=0}^T K^\infty _t(X_{t-1},U_{t-1},X_t) + J^\infty(X_T)\le 0\ a.s.\}
\]
is linear. Then \eqref{L2} has a solution $(\bar X,\bar U)\in\N$ and the Bellman equations \eqref{ber} have a unique lower bounded convex normal solution $(\tilde Q_t,J_t)_{t=0}^T$.
\end{theorem}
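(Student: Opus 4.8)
The plan is to deduce \thref{existd} from \thref{existl} by exhibiting \eqref{L2} as a special case of the Lagrangian problem \eqref{L} and then transferring the linearity hypothesis and the conclusions across this correspondence. Concretely, I would set $x_t:=(X_t,U_t)$ with $d=N+M$ and define $k_t(x_{t-1},x_t,\omega):=K_t(X_{t-1},U_{t-1},X_t,\omega)$ and $Q(x_T):=J(X_T)$, exactly as in the passage preceding \eqref{L2}. These are lower bounded convex normal integrands on $\reals^d\times\reals^d\times\Omega$ because $K_t$ and $J$ are, so \eqref{L2} is literally an instance of \eqref{L}. The corresponding Bellman equations \eqref{bel} are then \eqref{eq:beLdim}, whose solutions $(Q_t,\tilde Q_t)$ yield solutions $(J_t,\tilde Q_t)$ of the reduced equations \eqref{ber} via $J_t(X_t)=\inf_{U_t}Q_t(X_t,U_t)$, as already observed in the text.

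The first step is to check that the linearity hypothesis of \thref{existd} is equivalent to (or at least implies) the linearity hypothesis of \thref{existl} under this identification. Since the recession function of $k_t$ is $k_t^\infty(x_{t-1},x_t,\omega)=K_t^\infty(X_{t-1},U_{t-1},X_t,\omega)$ (recession functions of normal integrands are computed scenariowise, cf.\ \cite[Theorem~A.17]{pp24}, exactly as used in the proof of \thref{existl}), and similarly $Q^\infty(x_T)=J^\infty(X_T)$, the set
\[
\{x\in\N\mid \textstyle\sum_{t=1}^T k_t^\infty(x_{t-1},x_t)+Q^\infty(x_T)\le 0\ a.s.\}
\]
coincides with
\[
\{(X,U)\in\N\mid \textstyle\sum_{t=1}^T K_t^\infty(X_{t-1},U_{t-1},X_t)+J^\infty(X_T)\le 0\ a.s.\},
\]
which is the assumed-linear set of \thref{existd}. (I would note the harmless index mismatch: \thref{existd} writes $\sum_{t=0}^T$, but the $t=0$ term $K_0^\infty$ is $0$, so the sum effectively starts at $t=1$.) Hence the hypothesis of \thref{existl} holds.

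With the hypothesis transferred, \thref{existl} immediately gives a solution $\bar x=(\bar X,\bar U)\in\N$ of \eqref{L2}, which is the desired solution, and a unique lower bounded convex normal solution $(Q_t,\tilde Q_t)_{t=0}^T$ of \eqref{eq:beLdim}. Existence of a solution $(J_t,\tilde Q_t)_{t=0}^T$ to \eqref{ber} then follows by setting $J_t(X_t):=\inf_{U_t\in\reals^M}Q_t(X_t,U_t)$; I would remark that $J_t$ is a convex normal integrand (the infimum over $U_t$ of a convex normal integrand is again one, and it is lower bounded because $Q_t$ is). The main obstacle I anticipate is \emph{uniqueness} of the reduced solution: \thref{existl} delivers uniqueness only for $(Q_t,\tilde Q_t)$ in the full variable $(X_t,U_t)$, whereas \thref{existd} asserts uniqueness of $(\tilde Q_t,J_t)$. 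To close this gap I would argue backward in time: given any lower bounded convex normal solution $(\tilde Q_t,J_t)$ of \eqref{ber}, define $Q_t:=E_t\tilde Q_t$ and verify that $(Q_t,\tilde Q_t)$ solves \eqref{eq:beLdim}; the recursion in \eqref{eq:beLdim} determines $\tilde Q_t$ from $Q_{t+1}$, hence from $J_{t+1}$, and $J_t$ is then forced as $\inf_{U_t}E_t\tilde Q_t$, so uniqueness propagates from the terminal condition $J_T=J$ downward. This reduction of uniqueness to the backward recursion is the only genuinely non-formal part; everything else is bookkeeping in the dictionary $x_t\leftrightarrow(X_t,U_t)$.
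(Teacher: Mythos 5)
Your overall strategy coincides with the paper's: identify $x_t=(X_t,U_t)$, transfer the linearity hypothesis to \thref{existl}, obtain a solution of \eqref{L2} and the unique lower bounded convex normal solution $(Q_t,\tilde Q_t)_{t=0}^T$ of \eqref{eq:beLdim}, and pass to $J_t:=\inf_{U_t}Q_t(\cdot,U_t)$. The genuine gap is the parenthetical claim that ``the infimum over $U_t$ of a convex normal integrand is again one.'' This is false in general: the infimal projection of a lower semicontinuous convex function need not be lower semicontinuous. For instance, the function equal to $0$ on the closed convex set $C=\{(x,u)\in\reals^2\mid x>0,\ xu\ge 1\}$ and $+\infty$ elsewhere has infimal projection over $u$ equal to $0$ on $(0,\infty)$ and $+\infty$ on $(-\infty,0]$, which is not lower semicontinuous; so the epigraphical mapping of $J_t$ need not be closed-valued and $J_t$ need not be a normal integrand. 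This is exactly where the paper does its real work: \thref{existl} delivers, in addition to existence and uniqueness, the linearity of the recession sets $N_t(\omega)$ in \eqref{eq:Lagrec}; this implies that the sets $\{U_t\in\reals^M\mid Q_t^\infty(0,U_t,\omega)\le 0\}$ are linear, and then \cite[Theorem~1.40]{pp24} guarantees that the partial infimum $J_t$ is a lower bounded convex normal integrand. The extra conclusion about \eqref{eq:Lagrec} in \thref{existl} is not decorative; it is the ingredient your argument is missing.

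The rest of your bookkeeping is sound: the transfer of the linearity hypothesis (including the observation that the $t=0$ term is vacuous) matches the paper, and your backward-in-time argument for uniqueness of the reduced solution $(\tilde Q_t,J_t)$ is correct and in fact more explicit than the paper, which leaves that point implicit in the correspondence between \eqref{eq:beLdim} and \eqref{ber}. But without the recession-cone linearity argument for the infimal projection, the proof does not close.
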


\begin{proof}
By \thref{existl}, \eqref{L2} has a solution and the Bellman equations \eqref{eq:beLdim} have a unique solution $(Q_t,\tilde Q_t)_{t=0}^T$ of lower bounded convex normal integrands.  The linearity of  \eqref{eq:Lagrec} implies the linearity of the set
\[
\{U_t\in\reals^M \mid Q_t^\infty(0,U_t,\omega)\le 0\}.
\]
Thus, by \cite[Theorem~1.40]{pp24}, $J_t$ are normal integrands.
\end{proof}

The following is a direct consequence of \autoref{dimxi}.

\begin{theorem}[Dimensionality with respect to scenarios]\thlabel{dimxidim}
Let $(J_t,\tilde Q_t)_{t=0}^T$ be a lower bounded convex normal solution of the Bellman equations \eqref{ber} and assume that $K_t$ depends on $\omega$ only through random variables $(\theta_{t-1},\theta_t,\eta_t)$, where $\theta=(\theta)_{t=0}^T$ is an adapted Markov process and $\eta=(\eta_t)_{t=0}^T$ is an adapted sequence of random variables such that each $\eta_t$ is independent of $\theta$ and of $\eta_s$ for $s\ne t$. If $J$ depends on $\omega$ only through $\theta_T$, then $J_t$ depends on $\omega$ only through $\theta_t$. In particular, if $K_t$ and $J$ are independent of $\theta$, then $J_t$ is deterministic.
\end{theorem}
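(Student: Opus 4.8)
The plan is to deduce the statement directly from \autoref{dimxi} by regarding \eqref{L2} as an instance of the Lagrangian problem \eqref{L}. Setting $x_t:=(X_t,U_t)$, $k_t(x_{t-1},x_t,\omega):=K_t(X_{t-1},U_{t-1},X_t,\omega)$ and $Q(x_T):=J(X_T)$, the Bellman equations \eqref{bel} for this instance are precisely \eqref{eq:beLdim}. Since $K_t$ depends on $\omega$ only through $(\theta_{t-1},\theta_t,\eta_t)$, so does $k_t$, being a mere relabelling of its arguments; likewise $Q$ depends on $\omega$ only through $\theta_T$ because $J$ does. Thus the hypotheses of \autoref{dimxi} transfer verbatim, and the only real content is to manufacture, out of the given solution $(J_t,\tilde Q_t)_{t=0}^T$ of \eqref{ber}, a lower bounded convex normal solution of \eqref{bel} to which \autoref{dimxi} can be applied.

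First I would set $Q_t:=E_t\tilde Q_t$ (with $Q_T:=J$ at the terminal stage, consistent with $J_T=J$) and check that $(Q_t,\tilde Q_t)_{t=0}^T$ solves \eqref{eq:beLdim}. The last line of \eqref{ber} reads $J_t=\inf_{U_t\in\reals^M}(E_t\tilde Q_t)=\inf_{U_t\in\reals^M}Q_t$, so substituting this identity for $J_{t+1}$ into the middle line of \eqref{ber} and using that $K_{t+1}$ does not involve $U_{t+1}$ gives
\[
\tilde Q_t(X_t,U_t)=\inf_{X_{t+1}\in\reals^N}\Big\{K_{t+1}(X_t,U_t,X_{t+1})+\inf_{U_{t+1}\in\reals^M}Q_{t+1}(X_{t+1},U_{t+1})\Big\}=\inf_{(X_{t+1},U_{t+1})\in\reals^{N+M}}\{K_{t+1}(X_t,U_t,X_{t+1})+Q_{t+1}(X_{t+1},U_{t+1})\},
\]
which is the middle line of \eqref{eq:beLdim}; the relation $Q_t=E_t\tilde Q_t$ and the boundary condition $Q_T=J$ hold by construction, and each $Q_t$ inherits lower boundedness, convexity and normality from $\tilde Q_t$ (resp.\ from $J$) through conditional expectation. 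Hence $(Q_t,\tilde Q_t)$ is a lower bounded convex normal solution of \eqref{bel}, and \autoref{dimxi} yields that $Q_t$ depends on $\omega$ only through $\theta_t$.

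Finally I would descend from $Q_t$ back to $J_t=\inf_{U_t\in\reals^M}Q_t$. Writing $Q_t(X_t,U_t,\omega)=\hat Q_t(X_t,U_t,\theta_t(\omega))$ for a normal integrand $\hat Q_t$ on $\reals^{N+M}\times\reals^{d}$ and putting $\hat J_t(X_t,s):=\inf_{U_t\in\reals^M}\hat Q_t(X_t,U_t,s)$, one has $J_t(X_t,\omega)=\hat J_t(X_t,\theta_t(\omega))$, so $J_t$ depends on $\omega$ only through $\theta_t$; the final assertion follows the same way from the corresponding ``in particular'' statement of \autoref{dimxi}, since then $k_t$ and $Q$ are independent of $\theta$, making each $Q_t$ and hence $J_t$ deterministic. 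I expect this descent to be the only delicate point: one must know that the partial infimum $\hat J_t$ is again a normal integrand, equivalently that $J_t$ is a $\sigma(\theta_t)$-measurable normal integrand. This follows because the pointwise infimum of a normal integrand is measurable \cite[Corollary~1.23]{pp24} (applied here with the sub-$\sigma$-algebra $\sigma(\theta_t)$), whereupon \cite[Corollary~1.34]{pp24} supplies the representation, exactly as at the end of the proof of \autoref{dimxi}. The verification that the reconstructed pair solves \eqref{bel} is routine once the interchange of the two infima displayed above is in place.
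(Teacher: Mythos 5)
Your proposal is correct and follows essentially the same route as the paper, which simply declares \thref{dimxidim} a direct consequence of \autoref{dimxi}; you merely make explicit the reconstruction of a solution $(Q_t,\tilde Q_t)$ of \eqref{eq:beLdim} from the given solution of \eqref{ber} (the reverse of the passage displayed in Section~\ref{sec:dim}) and the descent $J_t=\inf_{U_t}Q_t$ via \cite[Corollary~1.34]{pp24}, both of which are the intended content of that one-line deduction.
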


\section{Proofs of Theorems~\ref{opoc}, \ref{existoc} and \ref{dimxioc}}\label{sec:oc}

The optimal control problem in the introduction of this paper is an instance of problem \eqref{L2} with $U_t=U^b_t$ and
\begin{multline}\label{eq:ock}
K_t(X_{t-1},U_{t-1}, X_t,\omega) := \\
 \inf_{U_t^a\in\reals^{M^a}}\{L_{t-1}(X_{t-1},U_{t-1},U^a_t,\omega) \mid X_t = F_{t-1}(X_{t-1},U_{t-1},U^a_t,\omega)\}.
\end{multline}
By \cite[Theorem~1.40]{pp24}, the linearity of \eqref{eq:ass} implies that $K_t$ is a convex normal integrand. The idea of treating optimal control problems as special cases of problems of Lagrange goes back to \cite[Example~3]{roc70b}; see also \cite[Example~II.3]{bis73b}.

With \eqref{eq:ock}, the Bellman equations \eqref{ber} can be written as
\[
\begin{split}
J_T &= J,\\
\tilde Q_t(X_t,U^b_t) &= \inf_{U^a_{t+1}\in\reals^{M^a}}\{L_t(X_t,U^b_t,U^a_{t+1})  + J_{t+1}( F_t(X_t,U^b_t,U^a_{t+1}))\},\\
J_t(X_t) &= \inf_{U^b_t\in\reals^{M^b}}(E_t\tilde Q_t)(X_t,U^b_t),
\end{split}
\]
which are the Bellman equations \eqref{be} for the optimal control problem \eqref{oc}.

\begin{proof}[Proof of \thref{opoc}]
With \eqref{eq:ock}, the optimum values of \eqref{L2} and \eqref{oc} coincide and $(\bar X,\bar U^b,\bar U^a)\in\N$ solves \eqref{oc} if and only if $(\bar X,\bar U^b)$ solves \eqref{L2} and
\begin{equation}\label{eq:ocl}
\bar U^a_t\in\argmin_{U_t^a\in\reals^{M^a}}\{L_{t-1}(\bar X_{t-1},\bar U^b_{t-1},U^a_t) \mid \bar X_t = F_{t-1}(\bar X_{t-1},\bar U^b_{t-1},U^a_t)\}
\end{equation}
for all $t=0,\ldots,T$ for almost every $\omega$. Indeed, by the interchange rule in \cite[Theorem~14.60]{rw98},
\begin{multline*}
  \inf_{U^a_t\in L^0(\Omega,\F_t,P;\reals^{M^a})}\{E[L_{t-1}(\bar X_{t-1},\bar U^b_{t-1},U^a_t] \mid X_t = F_{t-1}(\bar X_{t-1},\bar U_{t-1},U^a_t)\}\\
  = E[K_t(\bar X_{t-1},\bar U^b_{t-1},\bar X_t)]
\end{multline*}
and an $\bar U^a_t$ attains the above infimum if and only if \eqref{eq:ocl} holds almost surely. Similarly, the first claim of \thref{opd} implies that of \thref{opoc}.

By \cite[Theorem~1.40]{pp24}, the linearity of \eqref{eq:ass} implies that the infimum in  \eqref{eq:ock} is attained, so
\begin{align*}
 \bar X_{t+1} &\in \argmin_{X_{t+1}\in\reals^N}\{K_{t+1}(\bar X_t,\bar U_t,X_{t+1}) + J_{t+1}(X_{t+1})\}
\end{align*}
if and only if
\begin{align*}
  \bar U^a_{t+1} &\in \argmin_{U^a_{t+1}\in\reals^{M^a}}\{L_t(\bar X_t,\bar U^b_t,u^a_{t+1}) + J_{t+1}(F_t(\bar X_t,\bar U^b_t,U^a_{t+1}))\},\\
  \bar X_{t+1} &= F_{t}(\bar X_t,\bar U^b_t,\bar U^a_{t+1}).
\end{align*}
Thus, the optimality conditions in \thref{opd} can be written as those in \thref{opoc}.
\end{proof}

\begin{proof}[Proof of \thref{existoc}]
By \cite[Theorem~A.17, Theorem~1.40]{pp24}, the linearity of \eqref{eq:ass} implies that, for every $(X,U)\in\N$, there exists an adapted $U^a$ such that
\[
K^\infty_t(X_{t-1},U_{t-1}, X_t) := L^\infty _{t-1}(X_{t-1},U_{t-1},U^a_t)
\]
with $X_t = A_tX_{t-1}+B^b_tU^b_{t-1}+B^a_t U^a_t$ for all $t$ almost surely. Thus, 
\begin{multline*}
  \{(X,U)\in\N\mid \sum_{t=0}^T K^\infty _t(X_{t-1},U_{t-1},X_t) + J^\infty(X_T)\le 0\ a.s.\}\\
  =\{(X,U)\in\N\mid \exists U^a\in\N:\ \sum_{t=0}^T L_{t-1}^\infty(X_{t-1},U^b_{t-1},U^a_t) + J^\infty(X_T)\le 0,\\
  X_t = A_tX_{t-1} +B^a_t U^b_{t-1}+B^b_t U^a_t\ a.s.\}.
\end{multline*}
The linearity condition in \thref{existoc} thus implies the linearity condition in \thref{existd}. This gives the existence of solutions to \eqref{ber}. As noted above, this gives the existence of solutions to \eqref{be}. \thref{existd} also gives the existence of solutions to \eqref{L2}. As observed in the proof of \thref{opoc}, this together with the linearity of \eqref{eq:ass} gives the existence of solutions to \eqref{oc}.
\end{proof}

\begin{proof}[Proof of \thref{dimxioc}]
The dependence assumptions in \thref{dimxioc} imply those in \thref{dimxidim}. Thus, the claims follow from \thref{dimxidim}.
\end{proof}

\section{Stochastic dual dynamic programming}\label{sec:sddp}

Stochastic dual dynamic programming (SDDP) is an iterative algorithm for constructing approximate solutions to Bellman equations. SDDP was proposed in \cite{pp91} under the assumption that the randomness is driven by independent noises (no Markov process $\theta$) so that the Bellman functions are deterministic; see \autoref{dimxioc}. Moreover, \cite{pp91} did not split the decision variables into states and controls so the cost-to-go function at time $t$ was a function of all the decision variables chosen at time~$t$. The SDDP algorithm was extended to linear problems with non-independent noises in \cite{im96}; see also \cite{pm12,ls19} and their references.

This section gives a further extension of the SDDP algorithm that applies to the general decision-hazard-decision format of Section~\ref{sec:dim} that allows for nonlinear constraints and objectives, Markovian randomness and cost-to-go functions that only depend on the linking variables $X_t$.

Assume that $K_t$ are as in Section~\ref{sec:dim} so that the Bellman functions $J_t$ depend on the decision variables only through the system state $X_t$. We will also assume, as in \autoref{dimxidim}, that $K_t$ depends on $\omega$ only through random variables $(\theta_{t-1},\theta_t,\eta_t)$, where $\theta=(\theta)_{t=0}^T$ is an adapted Markov process and $\eta=(\eta_t)_{t=0}^T$ is an adapted sequence of random variables such that each $\eta_t$ is independent of $\xi$ and of $\eta_s$ for $s\ne t$. Similarly, we assume that $J$ depends on $\omega$ only through $\theta_T$. By \autoref{dimxidim}, each Bellman function $J_t$ then depends on $\omega$ only through $\theta_t$. Furthermore, we will now assume that $\xi:=(\theta,\eta)$ is finitely supported so that the conditional expectations below become finite sums.

For problems of the form \eqref{L2}, the SDDP algorithm proceeds as follows:

{\footnotesize
\begin{enumerate}
\item[0.]
{\bf Initialization:} Choose lower-approximations $(X_t,\theta_t)\mapsto J_t^0(X_t,\theta_t)$ of the cost-to-go functions $J_t$ such that $J_t^0(\cdot,\theta_t)$ are convex (polyhedral) and $J_T^0=J$. Set $k=0$.
\item
{\bf Forward pass:} Sample a path $\xi^k$ of $\xi$ and define $X^k_t$ for $t=0,\ldots,T$ by
\begin{align*}
  X_0^k&\in\argmin_{X_0\in\reals^N} J^k_0(X_0,\theta_0^k),\\
  U^k_t&\in\argmin_{U_t\in\reals^M}E\left[\inf_{X_{t+1}\in\reals^N}\{K_{t+1}(X^k_t,U_t,X_{t+1}) + J^k_{t+1}(X_{t+1})\}\Bigg{|}\theta^k_t\right],\\
  X^k_{t+1} &\in\argmin_{X_{t+1}\in\reals^N}\{K_{t+1}(X^k_t,U^k_t,X_{t+1},\xi^k) + J^k_{t+1}(X_{t+1},\theta_{t+1}^k)\}.
\end{align*}
\item
{\bf Backward pass:} Let $J_T^{k+1}:=J$ and, for $t=T-1,\ldots,0$,
\begin{align*}
  \tilde J_t^{k+1}(X_t^k,\theta_t^k) &:= \inf_{U_t\in\reals^M}E\left[\inf_{X_{t+1}\in\reals^N}\{K_{t+1}(X^k_t,U_t,X_{t+1}) + J^{k+1}_{t+1}(X_{t+1})\}\Bigg{|}\theta^k_t\right],\\
V^{k+1}_t&\in\partial\tilde J^{k+1}_t(X_t^k,\theta_t^k),
\end{align*}
and let
\[
J_t^{k+1}(X_t,\theta_t^k) := \max\{J_t^k(X_t,\theta_t^k),\tilde J_t^{k+1}(X_t^k,\theta_t^k) + V_t^{k+1}\cdot(X_t-X^k_t)\}\quad\forall X_t\in\reals^N.
\]
Set $k:=k+1$ and go to 1.
\end{enumerate}
}

The SDDP algorithm above, decomposes the original $(T+1)$-stage optimization problem \eqref{L2} into a series of two-stage convex stochastic optimization problems. If the $\theta_t^k$-conditional distribution of $\xi_{t+1}$ is supported by $m_{t+1}$ scenarios, then the second stage has $m_{t+1}$ copies of the variables $(X_{t+1},U^a_{t+1})$. The optimal solution $(X^k_{t+1},U^{ak}_{t+1})$ in the forward pass is the optimal second stage solution corresponding to the scenario $\xi^k$. In the hazard-decision format where the ``here-and-now'' variables $U^b_t$ are absent, the second stage optimization can be done separately for each scenario and, in the forward pass, only the scenario $k$ needs to be treated.

The subgradients $V_t^{k+1}$ of $\tilde J_t^{k+1}$ at $X_t^k$ in the backward pass are provided by standard optimization packages. Even in the abstract setting above, one can always introduce a dummy variable $\tilde X_t$ and add a constraint $\tilde X_t=X_t^k$ and then, the Lagrange multipliers of this constraint are subgradients of $\tilde J_t^{k+1}$ at $X_t^k$.

In the optimal control format of the introduction and Section~\ref{sec:oc}, one can use the system equations to substitute out the state variables $X_{t+1}$ so the SDDP algorithm above can be written as follows:

{\footnotesize
  \begin{enumerate}
  \item[0.]
    {\bf Initialization:} Choose lower-approximations $(X_t,\theta_t)\mapsto J_t^0(X_t,\theta_t)$ of the cost-to-go functions $J_t$ such that $J_t^0(\cdot,\theta_t)$ are convex (polyhedral) and  $J_T^0=J$. Set $k=0$.
  \item
    {\bf Forward pass:} Sample a path $\xi^k$ of $\xi$ and define $X^k_t$ for $t=0,\ldots,T$ by
    \begin{align*}
      X_0^k&\in\argmin_{X_0\in\reals^N} J^k_0(X_0,\theta_0^k),\\
      U_t^{bk}&\in\argmin_{U^b_t\in\reals^{M^b}} E\left[\inf_{U^a_{t+1}\in\reals^{M^a}}\{L_t(X^k_t,U^b_t,U^a_{t+1}) + J^k_{t+1}(F_{t}(X^k_t,U^b_t,U^a_{t+1}))\}
        \Bigg{|}\theta^k_t\right],\\
      U_{t+1}^{ak} &\in\argmin_{U^a_{t+1}\in\reals^{M^a}}\{L_t(X^k_t,U^{bk}_t,U^a_{t+1},\xi^k) + J^k_{t+1}(F_{t}(X^k_t,U^{bk}_t,U^a_{t+1},\xi^k),\theta^k_{t+1})\},\\
      X^k_{t+1} &= F_{t}(X^k_t,U^{bk}_t,U^{ak}_{t+1},\xi^k).
    \end{align*}
  \item
    {\bf Backward pass:} Let $J_T^{k+1}:=J$ and, for $t=T-1,\ldots,0$,
    \begin{align*}
      \tilde J_t^{k+1}(X_t^k,\theta_t^k)
      &:= \inf_{U^b_t\in\reals^{M^b}} E\left[\inf_{U^a_{t+1}\in\reals^{M^a}}\{L_t(X^k_t,U^b_t,U^a_{t+1}) + J^{k+1}_{t+1}(F_{t}(X^k_t,U^b_t,U^a_{t+1}))\}\Bigg{|}\theta^k_t\right],
      \\  
      V^{k+1}_t&\in\partial\tilde J^{k+1}_t(X_t^k,\theta_t^k)
    \end{align*}
    and let
    \[
      J_t^{k+1}(X_t,\theta_t^k) := \max\{J_t^k(X_t,\theta_t^k),\tilde J_t^{k+1}(X_t^k,\theta_t^k) + V_t^{k+1}\cdot(X_t-X^k_t)\}\quad\forall x_t\in\reals^N.
    \]
    Set $k:=k+1$ and go to 1.
  \end{enumerate}
}

Again, in the hazard-decision format, the second stage optimizations in the above two-stage problems can be done scenariowise. Moreover, in that case, the gradient in the backward pass is given by averaging the gradients of the infimum values with respect to $U^a_{t+1}$. Note also that in the decision-hazard format where the ``wait-and-see'' variables $U^a_t$ are absent, the optimization problems in the above control format of the SDDP algorithm become static problems of minimizing the expectations.

\bibliographystyle{plain}
\bibliography{sp}

\end{document}